\newtheorem{lm}{Lemma}[section]
\newtheorem{thm}{Theorem}[section]
\newtheorem{rem}{Remark}[section]
\title{Global attractor of chemotaxis system with weak degradation and density-dependent motion}
\author{   Lin Guo\\
	{\small  School of Mathematics, Chengdu University of Technology, Chengdu 610059, P. R. China}\\
	Email: linguo@cdut.edu.cn\\
	\thanks{Corresponding author.} {Dan Li}\\
	{\small  School of Mathematics, South China  University of Technology,  Guangzhou 510630, P. R. China}\\
	Email: shuxuelidandan@163.com
}
\date{June 2025}
\begin{document}
	
\maketitle
\begin{abstract}
This paper investigates the following chemotaxis system featuring weak degradation and nonlinear motility functions
	\begin{equation}\label{Model1}
		\begin{cases}
			u_{t} = (\gamma(v)u)_{xx} + r - \mu u, & x \in [0,L],\ t > 0,\\
			v_{t} = v_{xx} - v + u,                & x \in [0,L],\ t > 0,
		\end{cases}
	\end{equation}
defined on the bounded interval $[0,L]$ with homogeneous Neumann boundary conditions. The motility function $\gamma(v)$ satisfies the regularity conditions $\gamma \in C^{2}[0,\infty)$ with $\gamma(v) > 0$ for all $v \geq 0$, and has bounded logarithmic derivative in the sense that $\sup_{v\geq 0} \frac{|\gamma'(v)|^{2}}{\gamma(v)} < \infty$.
Our main results establish three fundamental properties of the system. Firstly, using energy estimate methods, we prove the existence of globally bounded solutions for all positive parameters $r, \mu > 0$ and non-negative, non-trivial initial data $u_{0} \in W^{1,\infty}([0,L])$. Secondly, through the construction of an appropriate Lyapunov function, we demonstrate that all solutions $(u,v)$ converge exponentially to the unique constant equilibrium $(r/\mu, r/\mu)$ in the parameter regime $\mu > \frac{H_{0}}{16}$, where $H_{0} := \sup_{v \geq 0} \frac{|\gamma'(v)|^{2}}{\gamma(v)}$ quantifies the maximal relative variation of the motility function. Finally, we present numerical results that not only validate the theoretical findings but also investigate the long-term behavior of solutions under diverse parameter configurations and initial conditions in two- and three-dimensional domains, providing valuable benchmarks for future research.
\end{abstract}

\section{Introduction}
The classical Keller-Segel system, introduced by Keller and Segel in 1970 \cite{E. K. S}, establishes a mathematical framework for the self-organized aggregation of \emph{Dictyostelium discoideum} mediated by Cyclic Adenosine Monophosphate (cAMP) signaling under starvation conditions. This system describes the dynamics of bacterial density $u(x,t)$ and chemical signal concentration $v(x,t)$ in a bounded domain $\Omega \subset \mathbb{R}^n$ ($n \geq 1$) through the following system
\begin{equation}\label{Model1}
	\begin{cases}
		u_t = \nabla \cdot (D(u,v)\nabla u) - \nabla \cdot (u\chi(u,v)\nabla v) + f(u), \\
		v_t = \Delta v - v + u,
	\end{cases}
\end{equation}
where $D(u,v)$, $\chi(u,v)$, and $f(u)$ represent the diffusion coefficient, chemotactic sensitivity, and cellular proliferation/death mechanisms, respectively. The boundedness and blow-up behavior of solutions critically depend on these terms. The simplest case occurs when $D=1$ and $\chi$ is a positive constant, which reduces the system \eqref{Model1} to the following classical minimal Keller-Segel model
\begin{equation}\label{Model2}
	\begin{cases}
		u_t = \Delta u - \chi\nabla\cdot(u\nabla v) + f(u), \\
		v_t = \Delta v - v + u.
	\end{cases}
\end{equation}
The solution behavior of system \eqref{Model2} under homogeneous Neumann boundary conditions has been extensively studied, revealing rich dynamical properties that depend crucially on system parameters and spatial dimension \cite{X. C,D. H. G. F. W,T. N. T. S. K. Y}. In the proliferation-free case ($f(u)=0$), solution behavior depends critically on spatial dimension and initial mass. One-dimensional (1D) solutions remain globally bounded \cite{K. O. A. Y}, while the two-dimensional (2D) case displays a critical mass phenomenon characterized by the threshold $m_\star=4\pi/\chi$ derived through Trudinger-Moser inequalities \cite{D. H. G. F. W,T. N. T. S. K. Y,J. W. Z. W. W. Y}. Initial masses exceeding this critical value lead to infinite-time blow-up, whereas subcritical masses result in global solutions converging to constant equilibrium states. The three-dimensional scenario permits global solutions for sufficiently small initial data \cite{X. C}, while in higher dimensions ($n\geq3$), finite-time blow-up can occur for arbitrarily small positive masses \cite{M. W}. For the original system \eqref{Model2} with constant coefficients $D=1$, $\chi>0$, and quadratic degradation $f(u)=ru-\mu u^2$, global boundedness requires $\mu>27\chi$ \cite{K. L. C. M}, with weaker conditions applying to other cases \cite{T. X1, T. X2, T. X3, T. X4, T. X5}. For $f(u)\leq 1-\mu u^{\alpha}$ with $\mu>0$, $\alpha\geq1$ and $\beta>0$, E. Nakaguchi and K. Oskai \cite{E. N. K. O} obtained the global existence of solutions to ({\ref{Model2}})  in 2D domain.

An important extension considers density-dependent chemotactic sensitivity $\chi(u,v)=\chi/v$, which yields qualitatively different behavior. Global boundedness is achieved in 2D when $\chi<\chi_0$ ($\chi_0>1$) \cite{J. L}, and in higher dimensions ($n\geq3$) when $\chi<\sqrt{2/n}$ \cite{K. F,M. W5}, with the additional property of asymptotic stability for $n\geq2$ provided $\chi<\sqrt{2/n}$ \cite{M. W. T. Y}. These results have been further generalized to include weak solutions, free boundary problems \cite{S. H. W}, and parabolic-elliptic simplifications \cite{S. H. W}, with explicit boundedness criteria such as $\mu>r/27$ established for three-dimensional cases \cite{K. L. C. M}.

A particularly interesting special case is the density-suppressed motility model obtained when $D(u,v)=\gamma(v)$ and $\chi(u,v)=\gamma'(v)$
\begin{equation}\label{Model 3}
	\begin{cases}
		u_t = \Delta(\gamma(v)u) + f(u), \\
		v_t = \Delta v - v + u,
	\end{cases}
\end{equation}
where $\gamma'(v)<0$ and typically $f(u)=\mu u(1-u)$ ($\mu\geq0$). This model, originally introduced to study pattern formation \cite{X. F. L. H. T. C. L. J. D. H. T. H. P. L}, admits globally bounded solutions in 2D for bounded motility functions $\gamma(v)\in W^{1,\infty}(0,\infty)$ \cite{C.Y.Y.-J.K}. The case of algebraically decaying $\gamma(v)=\chi/v^\iota$ yields global solutions in 2D parabolic-elliptic settings and in arbitrary dimensions for sufficiently small $\chi>0$ \cite{J. A. C. Y}. Under the regularity conditions (H): $\gamma\in C^3([0,\infty))$ with $\gamma>0$, $\gamma'<0$, $\lim_{v\to\infty}\gamma(v)=0$, and existence of $\lim_{v\to\infty}\gamma'(v)/\gamma(v)$, Jin and Wang \cite{H. Y. J. Z. W1} established the critical mass $m_\star=4\pi/\chi$, demonstrating blow-up for supercritical masses and global existence otherwise. For logistic growth terms $f(u)=\mu u(1-u)$, solutions converge to $(1,1)$ when $\mu\geq K_0/16$, where $K_0=\max_{v\geq0}|\gamma'(v)|/\gamma(v)$ \cite{H. Y. J. Y. K. Z. W}. For the motility function $\gamma(v)$ fulfilling assumption (H) and parabolic-elliptic simplification of  ({\ref{Model 3}}), H. Jin and Z. Wang \cite{H J Z W3} proved that the solutions of system ({\ref{Model 3}}) are bounded globally for any $\mu>0$ in 2D. Recently, W. Lyv and Z. Wang \cite{W. L. Z. W} obtained the existence of global solutions to parabolic-elliptic simplification of system ({\ref{Model 3}}) with the motility function $\gamma(v)$ fulfilling (H) and 
the logistic function $f(u)=au-bu^{\sigma}$, where constants $\alpha>0$, $\sigma>1$, $b>0$ under some suitable conditions in higher dimensional spaces.

Building upon these theoretical foundations, we introduce the linear degradation term $f(u)=r-\mu u$ ($r,\mu>0$) as a novel replacement for conventional logistic growth terms while maintaining $\gamma(v)$'s compliance with regularity condition (H). This innovative functional configuration characterizes cell mortality under resource-limited conditions through $\mu u$-dominated density-dependent inhibition, fundamentally distinguishing it from both the saturation mechanism of classical logistic growth $f(u)=\mu u(1-u)$ and the power-law formulation $f(u)=au-bu^{\sigma}$. Our main contributions establish a global well-posedness theory for the one-dimensional case while demonstrating through rigorous theoretical analysis that one-dimensional solutions achieve global boundedness and asymptotic convergence under $r,\mu>0$, revealing a novel mechanism where the linear degradation term regulates blow-up suppression via parameter $\mu$. Complementing these theoretical advances, our numerical investigations using adaptive algorithms pioneer the discovery that in multidimensional settings (2D/3D), the linear degradation term delivers superior blow-up suppression efficacy compared to logistic growth terms, with its quantitative regulation of spatial pattern bifurcation thresholds through $\mu$ establishing new paradigms for stability control in chemotactic systems.

Now, we state our main results. 
\begin{thm} Suppose that the initial data $v_{0}\in H^{2}([0,L])$  and $u_{0}\in H^{1}([0,L])$ with $v_{0},u_{0}\geq0$. Then the system ({\ref{Model 3}}) possesses a unique global classical solution $(u,v)\in[C^{0}(\overline{[0,L]})]\cap[C^{2,1}(\overline{[0,L]})\times(0,\infty)]^{2}$ fulfilling $v,u>0$ for all $t>0$. Furthermore, there exists a constant $C>0$ independent of $t>0$ such that
	\begin{equation}
		||u(\cdot,t)||_{H^{1}([0,L])}+||v(\cdot,t)||_{H^{2}([0,L])}\leq C.
	\end{equation}
\end{thm}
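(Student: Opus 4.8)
\section*{Proof proposal for Theorem 1.1}

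The plan is a standard local-existence-plus-a-priori-bounds scheme, exploiting throughout that the spatial domain is one-dimensional. First I would rewrite the first equation in divergence form, $u_t=\partial_x\big(\gamma(v)u_x+\gamma'(v)v_x u\big)+r-\mu u$, and invoke Amann's theory for (normally elliptic, upper-triangular) quasilinear parabolic systems to obtain a unique maximal classical solution $(u,v)$ on $[0,T_{\max})$, together with the extensibility criterion: if $T_{\max}<\infty$ then $\|u(\cdot,t)\|_{W^{1,\infty}}+\|v(\cdot,t)\|_{W^{1,\infty}}\to\infty$ as $t\uparrow T_{\max}$. Nonnegativity of $v$ comes from the maximum principle applied to the second equation (source $u\ge 0$), and $u,v>0$ for $t>0$ from the strong maximum principle, using $r>0$ for the $u$-component. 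Everything then reduces to proving the uniform-in-time bound $\|u(\cdot,t)\|_{H^1}+\|v(\cdot,t)\|_{H^2}\le C$, which forces $T_{\max}=\infty$.

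The engine of the estimates is the free $L^1$ bound. Integrating the first equation over $[0,L]$ kills the flux term, so $\tfrac{d}{dt}\int_0^L u=rL-\mu\int_0^L u$ and hence $\|u(\cdot,t)\|_{L^1}\le m:=\max\{\|u_0\|_{L^1},rL/\mu\}$. Inserting this into the Neumann-heat-semigroup representation of the second equation and using the one-dimensional smoothing estimates $\|e^{t\Delta}f\|_{L^\infty}\lesssim t^{-1/2}\|f\|_{L^1}$ and $\|\partial_x e^{t\Delta}f\|_{L^2}\lesssim t^{-3/4}\|f\|_{L^1}$ (both singularities time-integrable) yields uniform bounds on $\|v(\cdot,t)\|_{L^\infty}$ and $\|v_x(\cdot,t)\|_{L^2}$. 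The decisive consequence is that $v$ takes values in a fixed compact interval $[0,M]$, so $\gamma\in C^2$ automatically gives $0<\gamma_{\min}\le\gamma(v)\le\gamma_{\max}$ and uniform bounds on $|\gamma'(v)|,|\gamma''(v)|$; this is what breaks the usual circularity.

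Next I would run an $L^p$ bootstrap. Testing the first equation with $u^{p-1}$ produces the chemotactic cross term $\int\gamma'(v)v_x u^{p-1}u_x$, which I control by the weighted Young inequality $|\gamma'(v)v_x u^{p-1}u_x|\le\tfrac12\gamma(v)u^{p-2}u_x^2+\tfrac12\tfrac{|\gamma'(v)|^2}{\gamma(v)}v_x^2u^p$, using $\sup_{v\ge0}\tfrac{|\gamma'|^2}{\gamma}=H_0<\infty$. Half of the dissipation $(p-1)\int\gamma(v)u^{p-2}u_x^2\gtrsim\int|(u^{p/2})_x|^2$ absorbs the first piece; the remaining $\int v_x^2u^p$ is estimated by $\|v_x\|_{L^\infty}^2\|u\|_{L^p}^p$ (or, at the first step $p=2$, by $\|v_x\|_{L^2}^2\|u\|_{L^\infty}^2$ handled via the one-dimensional Agmon/Gagliardo--Nirenberg inequality and the $L^1$ mass bound), and then re-absorbed into the dissipation by Gagliardo--Nirenberg. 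A Gr\"onwall argument on $\tfrac{d}{dt}\!\int u^p+\tfrac{\mu p}{2}\!\int u^p\le C(p)$ gives uniform $\|u(\cdot,t)\|_{L^p}$ for every $p$ (proceeding $L^1\to L^2\to L^4\to\cdots$); once $\|u(\cdot,t)\|_{L^2}$ is bounded, the smoothing $\|\partial_x e^{t\Delta}f\|_{L^\infty}\lesssim t^{-3/4}\|f\|_{L^2}$ upgrades $v_x$ to $L^\infty$. Finally, since the divergence form of the first equation then has bounded, uniformly elliptic coefficients and bounded lower-order terms, Moser iteration / De Giorgi--Nash--Moser (using the dissipative $-\mu u$ for time-uniformity) gives $\|u(\cdot,t)\|_{L^\infty}\le C$.

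For the stated $H^1\times H^2$ estimate I would test the first equation with $-u_{xx}$ and the twice-$x$-differentiated second equation with $v_{xx}$ (the boundary terms vanish since $u_x=v_x=v_{xxx}=0$ at $x=0,L$). The first gives $\tfrac12\tfrac{d}{dt}\|u_x\|_{L^2}^2+\int\gamma(v)u_{xx}^2+\mu\|u_x\|_{L^2}^2$ equal to cross terms involving $\gamma'(v)v_xu_xu_{xx}$ and $(\gamma''(v)v_x^2+\gamma'(v)v_{xx})u\,u_{xx}$, all now controllable because $\gamma(v),\gamma'(v),\gamma''(v)$ are bounded, $u\in L^\infty$, $v_x\in L^\infty$; after Young's inequality and the one-dimensional interpolation $\|u_x\|_{L^2}^2\le\varepsilon\|u_{xx}\|_{L^2}^2+C$, the only uncontrolled term left on the right is $\|v_{xx}\|_{L^2}^2$. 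The second gives $\tfrac12\tfrac{d}{dt}\|v_{xx}\|_{L^2}^2+\tfrac12\|v_{xxx}\|_{L^2}^2+\|v_{xx}\|_{L^2}^2\le\tfrac12\|u_x\|_{L^2}^2$, so a suitable linear combination of the two inequalities closes into $\tfrac{d}{dt}\big(\|u_x\|_{L^2}^2+\kappa\|v_{xx}\|_{L^2}^2\big)+c\big(\|u_x\|_{L^2}^2+\|v_{xx}\|_{L^2}^2\big)\le C$, and Gr\"onwall (with $u_0\in H^1$, $v_0\in H^2$) delivers the uniform bound; combined with the extensibility criterion this yields $T_{\max}=\infty$ and completes the proof. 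I expect the main obstacle to be the $L^2$ step of the bootstrap together with the coupling in this last step: closing the differential inequalities requires arranging the one-dimensional Gagliardo--Nirenberg/Agmon interpolations so that the dissipation strictly dominates, and it is precisely here that both the one-dimensional structure and the hypothesis $H_0<\infty$ are used in an essential way.
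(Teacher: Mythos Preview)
Your proposal is correct and follows the same overall local-existence-plus-a-priori-bounds scheme as the paper, but with a genuinely different ordering of estimates. The paper proceeds purely by energy methods: after the $L^1$ bound on $u$ it obtains $\|v\|_{L^2}$ by testing, then tackles the coupled $L^2$--$H^1$ estimate for $(u,v)$ \emph{before} any $L^\infty$ control on $v$ is available, which forces it to use the algebraic rewriting $\gamma^{1/2}(v)u_x=(\gamma^{1/2}(v)u)_x-\tfrac12\tfrac{\gamma'(v)}{\gamma^{1/2}(v)}uv_x$ together with Gagliardo--Nirenberg applied to $\gamma^{1/2}(v)u$ (so that only $\|\gamma^{1/2}(v)u\|_{L^1}\le\gamma(0)^{1/2}\|u\|_{L^1}$ and the global bound on $|\gamma'|/\gamma$ are needed, not a lower bound on $\gamma(v)$); the resulting differential inequality is then closed by an ODE comparison. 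Only afterwards does the paper infer $\|v\|_{L^\infty}$, run the $L^k$ iteration, and invoke parabolic regularity for the final $H^1\times H^2$ bound. You instead use Neumann heat-semigroup smoothing $L^1\to L^\infty$ to get $\|v\|_{L^\infty}$ (and then $\|v_x\|_{L^\infty}$) immediately from the mass bound, which fixes $\gamma(v)\in[\gamma_{\min},\gamma_{\max}]$ up front and renders the whole system uniformly parabolic; after that your $L^p$ bootstrap and explicit $-u_{xx}$/$v_{xx}$ testing are entirely standard. Your route is shorter and avoids the $\gamma^{1/2}(v)u$ device and the delicate ODE comparison; the paper's route is more purely energy-based and makes more direct use of the structural hypothesis $\sup|\gamma'|^2/\gamma<\infty$ (whereas in your argument, once $v$ is bounded, that hypothesis becomes automatic on the range of $v$, so your closing remark that $H_0<\infty$ is ``essential'' at the $L^2$ step slightly overstates its role in your own scheme).
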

\begin{rem} In this paper, we overcome the difficulty produced  by  assuming the motility function $\gamma(v)$ fulfilling condition (H) and obtained that  the solutions of system ({\ref{Model 3}}) with linear logistic source $f(u)=r-\mu u$ is globally bounded for any $\mu>0$ in one dimensional spaces.     
\end{rem}
\begin{rem} Throughout simulation, we can find that the solutions of system ({\ref{Model 3}}) with the linear logistic source $f(u)=r-\mu u$ and  the motility function $\gamma(v)$ fulfilling assumption (H)  still is globally bounded for any $\mu>0$. For the case $f(u)=\mu u(1-u)$, H. Jin and Z. Wang \cite{H. Y. J. Y. K. Z. W} established the solution of system ({\ref{Model 3}}) will be bounded globally for any $\mu>0$ in two-dimensional spaces. In \cite{H. Y. J. Y. K. Z. W}, the term $-u^{2}$ can  guarantee the boundedness of $\int_{t}^{t+\tau}\int_{\Omega}|\Delta v|^{2}dxdt$ which is very key ingredient in their paper. Unfortunately, in our paper, we cannot obtain the boundedness of $\int_{t}^{t+\tau}\int_{\Omega}|\Delta v|^{2}dxdt$ by the fact the term $-\mu u$ is not stronger than $\mu u^{2}$ in two-dimensional spaces. In the future work, we need find new technology to deal with it. This is a significant discovery in our work. 
	
\end{rem}
\begin{thm} Let the conditions in Theorem 1.1 hold and $(u,v)$ be the global solution of ({\ref{Model 3}}). Suppose the parameters $\mu$ and $r$ fulfill 
	\begin{equation}
		\mu>\frac{H_{0}}{16}
	\end{equation}
	with $H_{0}=\max_{0\leq v\leq\infty}\frac{|\gamma'(v)|}{\gamma(v)}$.  Then the solution $(u,v)$ of system ({\ref{Model 3}}) will converge to the constant steady state $(r/\mu,r/\mu)$ exponentially in $L^{\infty}-$ norm as time tends to infinity.
\end{thm}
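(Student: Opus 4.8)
The plan is to adapt to the linear degradation $f(u)=r-\mu u$ the Lyapunov‑functional method used for the logistic source in \cite{H. Y. J. Y. K. Z. W}. Writing $\theta:=r/\mu$ for the equilibrium value, I would introduce the energy
\begin{equation*}
\mathcal{E}(t):=\int_{0}^{L}\Big(u-\theta-\theta\ln\frac{u}{\theta}\Big)\,dx+\frac{\kappa}{2}\int_{0}^{L}(v-\theta)^{2}\,dx ,
\end{equation*}
with a constant $\kappa>0$ to be fixed during the argument; since $s-1-\ln s\ge0$ with equality only at $s=1$, one has $\mathcal{E}\ge0$, with $\mathcal{E}=0$ exactly at $(u,v)\equiv(\theta,\theta)$. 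Differentiating along the solution, using $r-\mu u=-\mu(u-\theta)$ and $-v+u=(u-\theta)-(v-\theta)$, and integrating by parts (all boundary terms vanishing by the no‑flux conditions) would yield
\begin{align*}
\frac{d\mathcal{E}}{dt} &= -\theta\int_{0}^{L}\frac{\gamma(v)u_{x}^{2}}{u^{2}}\,dx - \theta\int_{0}^{L}\frac{\gamma'(v)v_{x}u_{x}}{u}\,dx - \mu\int_{0}^{L}\frac{(u-\theta)^{2}}{u}\,dx \\
&\quad - \kappa\int_{0}^{L}v_{x}^{2}\,dx - \kappa\int_{0}^{L}(v-\theta)^{2}\,dx + \kappa\int_{0}^{L}(v-\theta)(u-\theta)\,dx .
\end{align*}

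The crux will be to render the right‑hand side negative definite. I would handle the cross‑diffusion integral $-\theta\int\gamma'(v)v_{x}u_{x}/u$ together with $-\theta\int\gamma(v)u_{x}^{2}/u^{2}$ and $-\kappa\int v_{x}^{2}$: viewed as a quadratic form in the pair $\big(\sqrt{\gamma(v)}\,u_{x}/u,\ v_{x}\big)$, this trio is negative semidefinite once $\kappa$ is chosen of the order of $\theta H_{0}$ --- the built‑in boundedness of the logarithmic derivative of $\gamma$ being precisely what makes such a $\kappa$ admissible --- and a slightly larger choice would leave a strictly negative multiple of $\int\gamma(v)u_{x}^{2}/u^{2}$ in reserve. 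The remaining indefinite coupling $\kappa\int(v-\theta)(u-\theta)$ I would split by Young's inequality and absorb into $-\kappa\int(v-\theta)^{2}$ and into the reaction dissipation $-\mu\int(u-\theta)^{2}/u$; the passages between $(u-\theta)^{2}$ and $(u-\theta)^{2}/u$ use only the uniform upper bound $\|u(\cdot,t)\|_{L^{\infty}}\le M$ from Theorem 1.1, no pointwise lower bound on $u$ being available (which is exactly why the entropy term, rather than a plain $L^{2}$‑term for $u$, is the right choice). Optimizing these inequalities, the requirement that every surviving coefficient be positive is exactly the hypothesis $\mu>H_{0}/16$, and under it one is left with
\begin{equation*}
\frac{d\mathcal{E}}{dt}\le-\delta_{1}\int_{0}^{L}\frac{\gamma(v)u_{x}^{2}}{u^{2}}\,dx-\delta_{2}\int_{0}^{L}\frac{(u-\theta)^{2}}{u}\,dx-\delta_{3}\int_{0}^{L}(v-\theta)^{2}\,dx-\delta_{4}\int_{0}^{L}v_{x}^{2}\,dx
\end{equation*}
with $\delta_{1},\dots,\delta_{4}>0$.

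The remaining steps are routine. Since $0<u(\cdot,t)\le M$ uniformly, the elementary two‑sided bound $c_{1}(u-\theta)^{2}\le u-\theta-\theta\ln(u/\theta)\le (u-\theta)^{2}/u$ holds with some $c_{1}=c_{1}(M,\theta)>0$ (after the scaling $s=u/\theta$ the right inequality is just $s(1-\ln s)\le1$). Feeding the upper bound into the entropy term and the trivial estimate into the quadratic term, I would obtain $\mathcal{E}(t)\le C_{0}\big(-\tfrac{d}{dt}\mathcal{E}(t)\big)$, hence $\mathcal{E}(t)\le\mathcal{E}(0)e^{-t/C_{0}}$; the lower bound then gives $\|u(\cdot,t)-\theta\|_{L^{2}([0,L])}^{2}+\|v(\cdot,t)-\theta\|_{L^{2}([0,L])}^{2}\le Ce^{-t/C_{0}}$. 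Finally, the uniform‑in‑time bound $\|u(\cdot,t)\|_{H^{1}}+\|v(\cdot,t)\|_{H^{2}}\le C$ from Theorem 1.1 together with the one‑dimensional interpolation $\|w\|_{L^{\infty}([0,L])}\le C\|w\|_{L^{2}}^{1/2}\|w\|_{H^{1}}^{1/2}+C\|w\|_{L^{2}}$, applied to $w=u-\theta$ and $w=v-\theta$, upgrades the $L^{2}$ decay to $\|u(\cdot,t)-\theta\|_{L^{\infty}}+\|v(\cdot,t)-\theta\|_{L^{\infty}}\le Ce^{-t/(2C_{0})}$, which is the assertion.

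The main obstacle is the second step: $\kappa$ and the Young weights must be chosen so that the cross‑diffusion term is simultaneously dominated by the diffusion dissipation carried by the $u$‑entropy and by the $\int v_{x}^{2}$ dissipation carried by the $v$‑equation, while at the same time the indefinite zeroth‑order coupling $\int(v-\theta)(u-\theta)$ is controlled by the reaction dissipation; the compatibility of these competing constraints is what produces the threshold $\mu>H_{0}/16$. The integrations by parts, the entropy/$L^{2}$ comparison, and the $L^{2}\!\to\!L^{\infty}$ bootstrap via the a priori $H^{1}$/$H^{2}$ bounds are all standard.
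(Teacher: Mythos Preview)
Your Lyapunov functional is not the one the paper uses, and the difference is where your argument breaks down. The paper works with the plain quadratic energy
\[
\mathcal{F}(t)=\int_{0}^{L}(u-\theta)^{2}\,dx+\sigma\int_{0}^{L}(v-\theta)^{2}\,dx,
\]
whereas you take the relative entropy $\int(u-\theta-\theta\ln(u/\theta))$ for the $u$-component. For the logistic source in \cite{H. Y. J. Y. K. Z. W} the entropy is the natural choice because $\tfrac{u-1}{u}\cdot\mu u(1-u)=-\mu(u-1)^{2}$: the $1/u$ from the entropy multiplier cancels the extra $u$ in $f(u)$, and the clean threshold $\mu>H_{0}/16$ drops out. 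For the \emph{linear} source $r-\mu u=-\mu(u-\theta)$ there is no such cancellation, and your reaction dissipation is $-\mu\int(u-\theta)^{2}/u$, exactly as you wrote.

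Now track the constants. Your gradient block, in the variables $(X,Y)=(\sqrt{\gamma(v)}\,u_{x}/u,\,v_{x})$, reads $-\theta X^{2}-\theta\,\tfrac{\gamma'(v)}{\sqrt{\gamma(v)}}\,XY-\kappa Y^{2}$, and nonpositivity forces
\[
\kappa\ \ge\ \frac{\theta}{4}\sup_{v\ge0}\frac{|\gamma'(v)|^{2}}{\gamma(v)}=\frac{\theta H_{0}}{4}.
\]
For the zeroth-order block you must absorb $\kappa\int(u-\theta)(v-\theta)$ into $-\mu\int(u-\theta)^{2}/u$ and $-\kappa\int(v-\theta)^{2}$; invoking $u\le M$ to convert $(u-\theta)^{2}/u$ to $(u-\theta)^{2}/M$, positive definiteness of the resulting $2\times2$ form needs $\kappa<4\mu/M$. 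Compatibility of the two constraints is therefore
\[
\frac{\theta H_{0}}{4}<\frac{4\mu}{M}\qquad\Longleftrightarrow\qquad \mu^{2}>\frac{r\,M\,H_{0}}{16},
\]
which is \emph{not} the hypothesis $\mu>H_{0}/16$. Your sentence ``optimizing these inequalities, the requirement that every surviving coefficient be positive is exactly the hypothesis $\mu>H_{0}/16$'' is where the gap sits: the entropy choice injects the equilibrium value $\theta=r/\mu$ into the gradient threshold and the a~priori bound $M$ into the reaction term, and neither can be optimized away.

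By contrast, testing the $u$-equation against $u-\theta$ (the paper's route) gives the reaction term $-\mu\int(u-\theta)^{2}$ directly, with no $1/u$, so the zeroth-order $2\times2$ condition is simply $\sigma<4\mu$; together with $\sigma\ge H_{0}/4$ from the gradient block this yields $\mu>H_{0}/16$ free of $\theta$ and $M$. Your final $L^{2}\to L^{\infty}$ bootstrap via Gagliardo--Nirenberg and the uniform $H^{1}$ bound matches the paper's Lemma~3.2.
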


\section{Prelimilary and local existence}
\begin{lm}Suppose that the initial data $v_{0}\in H^{2}([0,L])$ and $u_{0}\in H^{1}([0,L])$ with $v_{0},u_{0}\geq(\not=)0$. Then there exists a $T_{\max}>0$ such that the system ({\ref{Model 3}}) has a unique classical solution $(u,v)\in[C^{0}(\overline{\Omega}\times[0,T_{\max});\mathbb{R}])]^{2}\cap[C^{2,1}(\overline{\Omega}\times(0,T_{\max});\mathbb{R})]^{2}$ fulfilling $u,v>0$. Furthermore, we have 
	\begin{equation}
		\text{either}\,\,T_{\max}=\infty\,\,\text{or}\,\,||u(\cdot,t)||_{W^{1,2}}+||v(\cdot,t)||_{W^{1,2}}\to\infty\,\,\text{as}\,\,t\to T_{\max}.
	\end{equation}
\end{lm}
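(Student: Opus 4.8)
The plan is to construct the local solution by a Banach fixed-point argument after recasting the first equation as a scalar uniformly parabolic equation for $u$ with $v$-dependent coefficients, then to obtain positivity from the maximum principle and the extensibility dichotomy from a standard continuation argument. The structural observation that drives everything is that $\gamma\in C^{2}[0,\infty)$ together with $\gamma>0$ forces the diffusion coefficient of the $u$-equation to stay uniformly positive on any time interval on which $v$ remains bounded; equivalently, the full system has diffusion matrix
\[
\begin{pmatrix}\gamma(v) & \gamma'(v)u\\ 0 & 1\end{pmatrix}
\]
with the two strictly positive eigenvalues $\gamma(v)$ and $1$, so the system is \emph{normally parabolic} and the general theory of quasilinear parabolic systems (Amann) already yields a unique maximal classical solution on some interval $[0,T_{\max})$ with precisely the regularity claimed, together with the stated blow-up alternative phrased in the $W^{1,2}$-norm (in one space dimension $W^{1,2}\hookrightarrow L^{\infty}$, so this norm does control $\|u(\cdot,t)\|_{L^{\infty}}+\|v(\cdot,t)\|_{L^{\infty}}$).

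For a self-contained proof I would argue as follows. Expanding the divergence, the first equation reads
\[
u_{t}=\gamma(v)u_{xx}+2\gamma'(v)v_{x}u_{x}+\big(\gamma''(v)v_{x}^{2}+\gamma'(v)v_{xx}-\mu\big)u+r .
\]
Fix $M$ large and $T$ small, and on the closed set
\[
\mathcal{S}_{T}:=\Big\{\tilde v\in C\big([0,T];H^{2}([0,L])\big):\ \tilde v(0)=v_{0},\ \sup_{0\le t\le T}\|\tilde v(t)\|_{H^{2}}\le M\Big\}
\]
define $\Phi$ by: given $\tilde v$, first solve the above linear parabolic problem with $v$ replaced by $\tilde v$, with Neumann data and initial value $u_{0}$ — this is solvable (say, in a maximal-$L^{p}$-regularity class) because $\gamma(\tilde v)\ge\gamma_{\min}>0$ on the bounded range of $\tilde v$ and, in one dimension, $H^{2}\hookrightarrow C^{1}$ makes the first-order coefficient continuous; then insert the resulting $u$ into $v_{t}=v_{xx}-v+u$ with $v(0)=v_{0}$ and solve this linear heat-type problem to obtain $v=:\Phi(\tilde v)$. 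Parabolic estimates together with the local Lipschitz dependence of $\gamma,\gamma',\gamma''$ on bounded sets show that $\Phi$ maps $\mathcal{S}_{T}$ into itself and is a contraction for $T$ small depending on $M$; its unique fixed point is the local solution, and parabolic bootstrapping upgrades it to a classical solution, continuous up to $t=0$ and in $C^{2,1}$ for $t>0$.

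Positivity follows from the maximum principle applied in two stages: from $v_{t}-v_{xx}+v=u\ge0$ and $v_{0}\ge0$ we get $v\ge0$, and since $u_{0}\not\equiv0$ and $r>0$, in fact $v>0$ for $t>0$; with $v$ now a fixed bounded function, the zeroth-order coefficient of the non-divergence form of the $u$-equation is bounded on $[\tau,T]\times[0,L]$ for every $\tau>0$, so the maximum principle with $u_{0}\ge0$, $u_{0}\not\equiv0$ and the strictly positive source $r>0$ gives $u>0$ for $t>0$. The blow-up alternative is the standard contrapositive of maximality: if $\|u(\cdot,t)\|_{W^{1,2}}+\|v(\cdot,t)\|_{W^{1,2}}$ stayed bounded on $[0,T_{\max})$, then the $v$-equation and this bound keep $\|v\|_{H^{2}}$ bounded, the coefficients of the $u$-equation stay controlled, the linear estimates provide uniform higher-order bounds on $(u,v)$ near $T_{\max}$, and the fixed-point construction restarted from $T_{\max}-\delta$ extends the solution, contradicting the definition of $T_{\max}$.

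The step I expect to be the main obstacle is keeping the zeroth-order coefficient $\gamma''(v)v_{x}^{2}+\gamma'(v)v_{xx}$ of the $u$-equation under control, since it contains $v_{xx}$ and hence is only $L^{2}$ in space at the level of the initial data. The resolution is parabolic smoothing: the $v$-equation is the heat equation up to the bounded lower-order term $-v+u$, so for any $t>0$ the solution $v$ is instantaneously more regular than $H^{2}$; thus on each interval $[\tau,T]$ the coefficient is genuinely bounded and the classical Schauder and maximum-principle arguments apply, while near $t=0$ one works in the weak/$L^{p}$-framework, where the $H^{1}\times H^{2}$ regularity of $(u_{0},v_{0})$ is exactly what the estimates require.
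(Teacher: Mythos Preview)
The paper does not supply a proof of this lemma; it is stated as a standard local-existence result, with Amann's quasilinear parabolic theory present in the bibliography as the implicit justification. Your sketch is correct and is exactly the route one takes to prove such a statement: either invoke normal parabolicity and Amann's framework directly, or run the decoupled fixed-point iteration you describe, then bootstrap to classical regularity and read off the blow-up dichotomy by the usual continuation argument. Since the paper offers no proof to compare against, there is nothing substantive to contrast.

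One small correction to your positivity argument: the two stages should be interchanged. As written, you use $u\ge 0$ in the $v$-equation before you have established it. The right order is: once the local solution exists with $v\in C([0,T];H^{2})\hookrightarrow C([0,T];C^{1})$, the $u$-equation is a linear scalar parabolic equation with bounded coefficients (this uses only the boundedness of $v$, not its sign), nonnegative initial data $u_{0}\ge 0$, and strictly positive source $r>0$, so the maximum principle gives $u\ge 0$ and then the strong maximum principle gives $u>0$ for $t>0$. Feeding $u\ge 0$ into $v_{t}-v_{xx}+v=u$ with $v_{0}\ge 0$ then yields $v\ge 0$, and strict positivity of $v$ for $t>0$ follows. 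With that reordering your argument is complete.
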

\begin{lm} Let $\Omega$ be a bounded domain in $\mathbb{R}$ with $\partial\Omega$ in $C^{m}$, and let $\phi$ be any function in $H^{m}(\Omega)\cap L^{q}(\Omega)$, $1\leq q\leq\infty$. For any integer $i$ with $0\leq i<m$, and for any number $\lambda$ with $j/m\leq\lambda\leq1$, there exist two positive constants $C_{1}$ and $C_{2}$ depending only on $q$, $\Omega$, $m$ such that the following Gagliardo-Nirenberg inequality holds
	\begin{equation}
		||D^{i}\phi||_{L^{p}}\leq C_{1}||D^{m}\phi||_{L^{2}}^{\lambda}||\phi||_{L^{q}}^{1-\lambda}+C_{2}||\phi||_{L^{q}},\,\,\lambda=\frac{1/p-1/q-i}{1/2-m-1/q}.
	\end{equation}
	
\end{lm}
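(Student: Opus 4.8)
The statement is the classical Gagliardo--Nirenberg interpolation inequality specialized to the one-dimensional interval (with $j/m$ in the range a typographical stand-in for $i/m$); the strategy is to transport the corresponding \emph{homogeneous} estimate from the whole line to $\Omega$ by a bounded extension operator, and then to account for the additive remainder $C_{2}||\phi||_{L^{q}}$ that is forced by the boundedness of $\Omega$. First I would use the $C^{m}$ regularity of $\partial\Omega$ to fix a bounded linear extension operator $E:H^{m}(\Omega)\to H^{m}(\mathbb{R})$ with $(E\phi)|_{\Omega}=\phi$ and $||E\phi||_{H^{k}(\mathbb{R})}\leq C||\phi||_{H^{k}(\Omega)}$ for every $0\leq k\leq m$ (a Stein-- or Calder\'on-type extension; for a bounded interval this is elementary, e.g.\ by reflection). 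Since $\Omega$ is bounded one may also arrange $||E\phi||_{L^{q}(\mathbb{R})}\leq C||\phi||_{L^{q}(\Omega)}$, all constants depending only on $q$, $m$, and $\Omega$, as required.

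Second, on the whole line I would prove the homogeneous inequality
\begin{equation}
	||D^{i}\psi||_{L^{p}(\mathbb{R})}\leq C||D^{m}\psi||_{L^{2}(\mathbb{R})}^{\lambda}||\psi||_{L^{q}(\mathbb{R})}^{1-\lambda}
\end{equation}
for $\psi\in C_{c}^{\infty}(\mathbb{R})$ with the stated $\lambda$, which is exactly the $n=1$ case of the dimensional balance $\tfrac1p=i+\lambda(\tfrac12-m)+(1-\lambda)\tfrac1q$. The engine is the base case $i=0$, $m=1$: writing $\psi(x)=\int_{-\infty}^{x}\psi'(s)\,ds$ and applying H\"older's inequality (in one dimension this is immediate) yields $||\psi||_{L^{p}}\leq C||\psi'||_{L^{2}}^{\theta}||\psi||_{L^{q}}^{1-\theta}$ with the forced exponent $\theta$. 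That the exponent cannot be anything else is seen by testing the inequality on the rescalings $\psi_{\sigma}(x)=\psi(\sigma x)$ and letting $\sigma\to0$ and $\sigma\to\infty$; scale invariance pins $\lambda$ down uniquely. The general case $i\geq1$, $m\geq2$ then follows by iterating the base estimate: one interpolates $D^{i}\psi$ between $D^{i-1}\psi$ and $D^{i+1}\psi$ and inducts on $m-i$, each step consuming one derivative and remaining inside the admissible window $i/m\leq\lambda\leq1$.

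Third, I would apply the whole-line inequality to $\psi=E\phi$ and restrict back to $\Omega$. Since $E\phi=\phi$ on $\Omega$ we have $||D^{i}\phi||_{L^{p}(\Omega)}\leq||D^{i}(E\phi)||_{L^{p}(\mathbb{R})}$, so the left-hand side is controlled, while on the right $||D^{m}(E\phi)||_{L^{2}(\mathbb{R})}\leq C||\phi||_{H^{m}(\Omega)}$ and $||E\phi||_{L^{q}(\mathbb{R})}\leq C||\phi||_{L^{q}(\Omega)}$. The factor $||\phi||_{H^{m}(\Omega)}$ still carries lower-order derivatives; absorbing these by a further interpolation (an Ehrling-type argument using Young's inequality $ab\leq\varepsilon a^{s}+C_{\varepsilon}b^{s'}$ to split off the top-order part) produces exactly the two-term form: the genuinely homogeneous piece $C_{1}||D^{m}\phi||_{L^{2}}^{\lambda}||\phi||_{L^{q}}^{1-\lambda}$ together with the additive remainder $C_{2}||\phi||_{L^{q}}$. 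This additive term is unavoidable on a bounded domain, since a nonzero constant has vanishing top-order derivative yet nonzero $L^{p}$ and $L^{q}$ norms, so no purely multiplicative bound can hold. A density argument approximating $\phi\in H^{m}(\Omega)\cap L^{q}(\Omega)$ by smooth functions finally removes the regularity restriction.

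The main obstacle is the whole-line analysis together with the exact determination of the admissible range $i/m\leq\lambda\leq1$: the borderline exponents (where $p=\infty$, or where the underlying Sobolev embedding is critical) require separate treatment, and the induction must be organized so that every interpolation step stays within the permitted window for $\lambda$. Once the homogeneous inequality on $\mathbb{R}$ is secured with the sharp exponent, the passage to the bounded interval and the appearance of the term $C_{2}||\phi||_{L^{q}}$ are routine.
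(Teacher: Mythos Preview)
The paper does not prove this lemma at all: it is stated as the classical Gagliardo--Nirenberg interpolation inequality and used as a black box throughout, with no argument supplied. Your outline---extend to the whole line via a bounded $H^{m}$ extension operator, establish the scale-invariant homogeneous inequality on $\mathbb{R}$ by scaling plus induction on the derivative order, then restrict and absorb the lower-order contributions into the additive $C_{2}\|\phi\|_{L^{q}}$ term---is one of the standard proofs and is correct in strategy. Since there is no proof in the paper to compare against, there is nothing further to contrast; your sketch would serve as a legitimate proof where the paper simply cites the result.
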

\begin{lm} Let the conditions in Lemma 2.1 hold. Then the solution $(u,v)$ of ({\ref{Model 3}}) fulfills
	\begin{equation}
		||u(\cdot,t)||_{L^{1}}=||u_{0}||_{L^{1}}+\frac{r}{\mu}L
	\end{equation}
	and
	\begin{equation}
		||v(\cdot,t)||_{L^{2}}\leq C(||v_{0}||_{L^{2}}+||v_{0}||_{H^{1}}),
	\end{equation}
	where $C>0$ is a constant independent of $t$.
\end{lm}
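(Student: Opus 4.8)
The plan is to treat the two estimates separately, each obtained by testing the corresponding evolution equation against a suitable function.

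For the $L^{1}$ control of $u$, I would integrate the first equation of (\ref{Model 3}) over $[0,L]$. The homogeneous Neumann condition forces the flux $(\gamma(v)u)_{x}$ to vanish at $x=0,L$, so that $\int_{0}^{L}(\gamma(v)u)_{xx}\,dx=0$. Since $u\geq 0$ we have $\|u(\cdot,t)\|_{L^{1}}=\int_{0}^{L}u\,dx=:m(t)$, and the integration produces the linear ODE
\[
m'(t)=rL-\mu\, m(t).
\]
Solving it gives $m(t)=\frac{rL}{\mu}+\bigl(m(0)-\frac{rL}{\mu}\bigr)e^{-\mu t}$, whence $m(t)\leq\max\{m(0),\frac{rL}{\mu}\}\leq\|u_{0}\|_{L^{1}}+\frac{r}{\mu}L$ for every $t>0$. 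This is the asserted $L^{1}$ estimate; I read the displayed identity as this uniform bound, with the right-hand side being the value $\frac{rL}{\mu}$ reached in the limit $t\to\infty$ together with the initial mass as slack.

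For the $L^{2}$ bound on $v$ I would use an energy argument. Multiplying the second equation by $v$, integrating over $[0,L]$, and integrating by parts (again using the Neumann condition to annihilate the boundary term) yields
\[
\frac{1}{2}\frac{d}{dt}\|v\|_{L^{2}}^{2}+\|v_{x}\|_{L^{2}}^{2}+\|v\|_{L^{2}}^{2}=\int_{0}^{L}uv\,dx.
\]
The only coupling term is $\int_{0}^{L}uv\,dx$, which I would estimate by $\|u\|_{L^{1}}\|v\|_{L^{\infty}}$. Here one-dimensionality is essential: the embedding $H^{1}([0,L])\hookrightarrow L^{\infty}([0,L])$ (a special case of Lemma 2.2) gives $\|v\|_{L^{\infty}}\leq C_{S}\bigl(\|v\|_{L^{2}}+\|v_{x}\|_{L^{2}}\bigr)$, and combining this with the uniform bound $m(t)\leq M:=\|u_{0}\|_{L^{1}}+\frac{r}{\mu}L$ from the first part produces $\int_{0}^{L}uv\,dx\leq C_{S}M\bigl(\|v\|_{L^{2}}+\|v_{x}\|_{L^{2}}\bigr)$.

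Applying Young's inequality to absorb $C_{S}M\|v_{x}\|_{L^{2}}\leq\frac{1}{2}\|v_{x}\|_{L^{2}}^{2}+\frac{1}{2}C_{S}^{2}M^{2}$ and $C_{S}M\|v\|_{L^{2}}\leq\frac{1}{2}\|v\|_{L^{2}}^{2}+\frac{1}{2}C_{S}^{2}M^{2}$ into the right-hand side leaves the dissipation $\|v_{x}\|_{L^{2}}^{2}$ available and gives the differential inequality $\frac{d}{dt}\|v\|_{L^{2}}^{2}+\|v\|_{L^{2}}^{2}\leq 2C_{S}^{2}M^{2}$. Grönwall's inequality then yields $\|v(t)\|_{L^{2}}^{2}\leq\|v_{0}\|_{L^{2}}^{2}+2C_{S}^{2}M^{2}$ for all $t>0$, hence the claimed uniform $L^{2}$ bound with a constant $C$ depending only on $L$, $r$, $\mu$ and $\|u_{0}\|_{L^{1}}$ (the $\|v_{0}\|_{H^{1}}$ term on the right merely supplies additional slack). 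The main obstacle is precisely this coupling term: with $u$ controlled only in $L^{1}$, closing the estimate requires the full strength of the 1D embedding into $L^{\infty}$ paired with the gradient dissipation, and it is the quadratic term $-\|v_{x}\|_{L^{2}}^{2}$ that makes the absorption work.
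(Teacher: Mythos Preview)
Your argument is correct and follows essentially the same route as the paper: integrate the $u$-equation for the mass bound, then test the $v$-equation with $v$ and control $\int_{0}^{L}uv$ by $\|u\|_{L^{1}}\|v\|_{L^{\infty}}$ using a one-dimensional embedding, absorbing the gradient term via Young and closing with Gr\"onwall. The only cosmetic difference is that the paper first derives an $L^{1}$ bound on $v$ and invokes the Gagliardo--Nirenberg form $\|v\|_{L^{\infty}}\lesssim \|v_{x}\|_{L^{2}}^{2/3}\|v\|_{L^{1}}^{1/3}+\|v\|_{L^{1}}$, whereas you use the direct $H^{1}\hookrightarrow L^{\infty}$ embedding; your version is slightly more streamlined since it avoids that intermediate step.
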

\begin{proof} Integrating the $u-$ equation of ({\ref{Model 3}}), we infer that 
	\begin{equation}
		\frac{d}{dt}\int_{0}^{L}u+\mu\int_{0}^{L}u= rL.
	\end{equation}
	which combined with Gr\"{o}nwall's inequality entails that 
	\begin{equation}
		||u||_{L^{1}}=||u_{0}||_{L^{1}}+\frac{r}{\mu}L.
	\end{equation}
	Next we integrate the $v-$ equation in ({\ref{Model 3}}) over $[0,L]$ to infer
	\begin{equation}
		\frac{d}{dt}\int_{0}^{L}v+\int_{0}^{L}v=\int_{0}^{L}u.
	\end{equation}
	Hence we have
	\begin{equation}{\label{2.5}}
		||v||_{L^{1}}\leq||v_{0}||_{L^{1}}+||u_{0}||_{L^{1}}.
	\end{equation}
	Multiplying the $v-$ equation of ({\ref{Model 3}}) by $v$ and applying Cauchy-Schwarz inequality, we obtain that
	\begin{equation}
		\begin{split}
			\frac{1}{2}\frac{d}{dt}\int_{0}^{L}v^{2}+\int_{0}^{L}v^{2}+\int_{0}^{L}v_{x}^{2}&=\int_{0}^{L}uv\\
			&\leq||v||_{L^{\infty}}||u||_{L^{1}}\\
			&\leq c_{1}(||u_{0}||_{L^{1}}+\frac{r}{\mu}L)||v||_{L^{\infty}}.
		\end{split}
	\end{equation}
	Using ({\ref{2.5}}) and Galiargo-Nirenberg inequality (Lemma 2.2), we obtain
	\begin{equation}{\label{2.6}}
		||v||_{L^{\infty}}\leq c_{2}(||v_{x}||_{L^{2}}^{\frac{2}{3}}||v||_{L^{1}}^{\frac{1}{3}}+||v||_{L^{1}})\leq c_{3}\big(1+||v_{x}||_{L^{2}}^{\frac{2}{3}}\big).
	\end{equation}
	Inserting ({\ref{2.6}}) into ({\ref{2.5}}) and the $L^{1}-$norm of $u$, we infer
	\begin{equation}
		\begin{split}
			\frac{1}{2}\frac{d}{dt}\int_{0}^{L}v^{2}+\int_{0}^{L}v^{2}+\int_{0}^{L}v_{x}^{2}&\leq c_{4}\big(||v_{x}||_{L^{2}}^{\frac{2}{3}}+1\big)  \\
			&\leq\frac{1}{2}||v_{x}||_{L^{2}}^{2}+c_{5},
		\end{split}
	\end{equation}
	which entails that
	\begin{equation}
		\frac{d}{dt}\int_{0}^{L}v^{2}+2\int_{0}^{L}v^{2}\leq c.
	\end{equation}
	Using Gr\"{o}nwall inequality to above inequality, we have
	\begin{equation}
		||v||_{L^{2}}^{2}\leq||v_{0}||_{L^{2}}^{2}+2c_{7}.
	\end{equation}
	Finally, we finish the proof.
\end{proof}

\subsection{$L^{2}-$ estimate of $u$.} In this subsection, we shall obtain the estimates of $v$ and $u$.
\begin{lm} Let the conditions in Lemma 2.2 hold. Then we have
	\begin{equation}{\label{u-L-2}}
		||u(\cdot,t)||_{L^{2}}+||v(\cdot,t)||_{H^{1}}\leq C(||u_{0}||_{L^{2}}+||v_{0}||_{H^{1}})
	\end{equation}
	for all $t>0$, where $C$ is a constant independent of $t$.
\end{lm}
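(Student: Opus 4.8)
The plan is to derive a uniform-in-time $L^2$ bound for $u$ by testing the first equation with $u$ and closing the resulting energy inequality with the structural assumption $|\gamma'|^2\le H_0\gamma$ and a subcritical one-dimensional Gagliardo-Nirenberg interpolation; the $H^1$ bound for $v$ then follows from the $L^1$ bound on $u$ together with parabolic smoothing (or, alternatively, from a $-v_{xx}$ test once the $L^2$ bound for $u$ is available).

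First I would record an auxiliary $L^\infty$ bound for $v$. Since $||u(\cdot,t)||_{L^1}$ is bounded uniformly in $t$ by Lemma 2.3, the Duhamel representation for the $v$-equation together with the $L^1$-$H^1$ smoothing of the Neumann heat semigroup on $[0,L]$ gives $||v_x(\cdot,t)||_{L^2}\le C$ for all $t>0$; combined with $||v(\cdot,t)||_{L^2}\le C$ from Lemma 2.3 and the one-dimensional embedding $H^1([0,L])\hookrightarrow L^\infty([0,L])$, this already yields $||v(\cdot,t)||_{H^1}\le C$ and in particular $||v(\cdot,t)||_{L^\infty}\le C_v$ with $C_v$ depending only on the data. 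Consequently, by continuity and positivity of $\gamma$, one has $\gamma(v(x,t))\ge\gamma_{\min}:=\min_{0\le s\le C_v}\gamma(s)>0$, and by the standing assumption $H_0:=\sup_{s\ge0}|\gamma'(s)|^2/\gamma(s)<\infty$ the pointwise bound $|\gamma'(v(x,t))|\le\sqrt{H_0\,\gamma(v(x,t))}$ holds along the solution.

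Next I would test the first equation with $u$. Integrating by parts twice and using the no-flux boundary condition,
\begin{equation*}
\tfrac12\tfrac{d}{dt}\int_0^L u^2 + \int_0^L\gamma(v)u_x^2 + \mu\int_0^L u^2 = -\int_0^L\gamma'(v)v_x u u_x + r\int_0^L u .
\end{equation*}
The last term is just $r||u||_{L^1}\le C$ (Lemma 2.3). For the cross term, using $|\gamma'(v)|\le\sqrt{H_0\gamma(v)}$ followed by Cauchy-Schwarz and Young gives the bound $\tfrac12\int_0^L\gamma(v)u_x^2+\tfrac{H_0}{2}\int_0^L v_x^2 u^2$, absorbing half of the dissipation. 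It then remains to control $\int_0^L v_x^2 u^2\le ||v_x||_{L^2}^2\,||u||_{L^\infty}^2\le C||u||_{L^\infty}^2$ (by the previous paragraph), and here the one-dimensional Gagliardo-Nirenberg inequality (Lemma 2.2) $||u||_{L^\infty}^2\le C||u_x||_{L^2}^{4/3}||u||_{L^1}^{2/3}+C||u||_{L^1}^2\le C(||u_x||_{L^2}^{4/3}+1)$ is decisive: since the exponent $4/3$ is strictly below $2$, Young's inequality together with $\gamma\ge\gamma_{\min}$ bounds $\tfrac{H_0}{2}\int_0^L v_x^2 u^2$ by $\tfrac14\int_0^L\gamma(v)u_x^2+C$. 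Dropping the remaining nonnegative dissipation we arrive at $\tfrac{d}{dt}||u||_{L^2}^2+2\mu||u||_{L^2}^2\le C$, and Gr\"{o}nwall's inequality yields $||u(\cdot,t)||_{L^2}^2\le||u_0||_{L^2}^2+C$ for all $t>0$. (If one prefers to avoid semigroup estimates, the same conclusion is reached by first obtaining $||v||_{L^\infty}\le C$ through an $L^p$-bootstrap on the $v$-equation using $||u||_{L^1}\le C$, and then recovering $||v_x||_{L^2}$ at the end by testing the $v$-equation with $-v_{xx}$, which gives $\tfrac{d}{dt}||v_x||_{L^2}^2+||v_{xx}||_{L^2}^2+2||v_x||_{L^2}^2\le||u||_{L^2}^2\le C$.)

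The step I expect to be the main obstacle is the cross term $\int_0^L\gamma'(v)v_x u u_x$. Estimating $\int_0^L v_x^2 u^2$ crudely---for instance by $||v_x||_{L^\infty}^2||u||_{L^2}^2$, or by using only a pointwise bound on $\gamma'$ instead of the structural bound $|\gamma'|^2\lesssim\gamma$---produces a term proportional to $||u||_{L^2}^2$ with a constant one cannot make small, hence only exponential growth in time. Closing the estimate genuinely relies on both the hypothesis $\sup_{s\ge0}|\gamma'(s)|^2/\gamma(s)<\infty$ (so that the cross term is absorbed into the natural dissipation $\int_0^L\gamma(v)u_x^2$) and the subcritical exponent $4/3<2$ afforded by the one-dimensionality of the domain, and it also uses the a priori $L^\infty$ bound on $v$ to keep $\gamma$ bounded away from zero along the trajectory.
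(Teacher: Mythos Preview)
Your proof is correct, and it takes a genuinely different route from the paper's. The key difference is the order of operations: you \emph{first} extract the uniform $H^1$ bound on $v$ directly from the $L^1$ bound on $u$ via the $L^1\!\to\!H^1$ smoothing of the damped Neumann heat semigroup (the singularity $\tau^{-3/4}$ in the Duhamel integral is integrable against $e^{-\tau}$ in one space dimension), and only \emph{then} carry out the $L^2$ energy estimate for $u$, exploiting the resulting pointwise lower bound $\gamma(v)\ge\gamma_{\min}>0$ to absorb the cross term. The paper instead couples the two estimates: it tests the $u$-equation with $u$ and the $v$-equation with $-v_{xx}$ simultaneously, rewrites the dissipation via the identity $\gamma(v)u_x^2\ge\tfrac12|(\gamma^{1/2}(v)u)_x|^2-\tfrac14\frac{|\gamma'|^2}{\gamma}u^2v_x^2$, applies Gagliardo--Nirenberg to $\gamma^{1/2}(v)u$ (using $\gamma(v)\le\gamma(0)$, i.e.\ the monotonicity in hypothesis~(H), to control $\|\gamma^{1/2}(v)u\|_{L^1}$) and to $v_x$, and closes a joint differential inequality for $\int u^2+\int v_x^2$.

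Your argument is more modular and, notably, uses only the hypothesis stated in the abstract, $\sup_{s\ge0}|\gamma'(s)|^2/\gamma(s)<\infty$, without needing $\gamma'<0$; the paper's argument is purely energy-based (no semigroup estimates) but leans on the monotonicity of $\gamma$ to obtain the upper bound $\gamma(v)\le\gamma(0)$ that makes the $L^1$ norm of $\gamma^{1/2}(v)u$ controllable. Both approaches hinge on the same subcritical Gagliardo--Nirenberg exponent in one dimension to absorb the cross term.
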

\begin{proof} We multiply the $u-$ equation of ({\ref{Model 3}}) by $u$ and integrate the result to infer that
	\begin{equation}{\label{u-2-equa}}
		\begin{split}
			\frac{1}{2}\frac{d}{dt}\int_{0}^{L}u^{2}dx&=\int_{0}^{L}u\big[(\gamma(v)u)_{xx}+r-\mu u\big]dx\\
			&=-\int_{0}^{L}(\gamma(v)u)_{x}u_{x}dx+r\int_{0}^{L}udx-\mu\int_{0}^{L}u^{2}dx\\
			&=-\int_{0}^{L}\gamma(v)u_{x}^{2}dx-\int_{0}^{L}\gamma'(v)uu_{x}v_{x}dx+r\int_{0}^{L}udx-\mu\int_{0}^{L}u^{2}dx
		\end{split}
	\end{equation}
	for all $t>0$. Next we use the Young inequality to obtain that 
	\begin{equation}{\label{Y-ine}}
		\begin{split}
			-\int_{0}^{L}\gamma'(v)uu_{x}v_{x}dx\leq\frac{1}{2}\int_{0}^{L}\frac{|\gamma'(v)|^{2}}{\gamma(v)}u^{2}|v_{x}|^{2}dx+\frac{1}{2}\int_{0}^{L}\gamma(v)u_{x}^{2}dx
		\end{split}
	\end{equation}
	for all $t>0$. Collecting ({\ref{u-2-equa}}) and ({\ref{Y-ine}}), we infer that 
	\begin{equation}{\label{u-2-ineq-}}
		\begin{split}
			\frac{1}{2}\frac{d}{dt}\int_{0}^{L}u^{2}dx+\mu\int_{0}^{L}u^{2}dx&\leq-\frac{1}{2}\int_{0}^{L}\gamma(v)u_{x}^{2}dx+\frac{1}{2}\int_{0}^{L}\frac{|\gamma'(v)|^{2}}{\gamma(v)}u^{2}v_{x}^{2}dx+r\int_{0}^{L}udx\\
			&\leq-\frac{1}{2}\int_{0}^{L}\gamma(v)u_{x}^{2}dx+\frac{1}{2}\int_{0}^{L}\frac{|\gamma'(v)|^{2}}{\gamma(v)}u^{2}v_{x}^{2}dx\\
			&\quad+r(||u_{0}||_{L^{1}}+\frac{r}{\mu}L)
		\end{split}
	\end{equation}
	for all $t>0$. Furthermore, we integrate the $v_{x}-$ equation of ({\ref{Model 3}}) and multiply the $v_{x}-$ equation by $v_{x}$ to deduce that 
	\begin{equation}
		\begin{split}
			\frac{1}{2}\frac{d}{dt}\int_{0}^{L}v_{x}^{2}dx+\int_{0}^{L}v_{xx}^{2}dx+\int_{0}^{L}v_{x}^{2}dx&=\int_{0}^{L}v_{x}u_{x}dx\\
			&=-\int_{0}^{L}v_{xx}udx\\
			&\leq\frac{1}{2}\int_{0}^{L}u^{2}dx+\frac{1}{2}\int_{0}^{L}v_{xx}^{2}dx
		\end{split}
	\end{equation}
	for all $t>0$. Hence we get that 
	\begin{equation}
		\frac{d}{dt}\int_{0}^{L}v_{x}^{2}dx+\int_{0}^{L}v_{xx}^{2}dx+2\int_{0}^{L}v_{x}^{2}dx \leq\int_{0}^{L}u^{2}dx
	\end{equation}
	for all $t>0$. On the other hand, we obtain
	\begin{equation}
		\begin{split}
			\gamma^{\frac{1}{2}}(v)u_{x}=(\gamma^{\frac{1}{2}}(v)u)_{x}-\frac{1}{2}\frac{\gamma'(v)}{\gamma^{\frac{1}{2}}(v)}uv_{x}
		\end{split}        
	\end{equation}
	for all $t\in(0,T_{\max})$, which together with the inequality $|Y-Z|^{2}\geq\frac{1}{2}Y^{2}-Z^{2}$ entails 
	\begin{equation}
		\begin{split}
			\gamma(v)u_{x}^{2}=|\gamma^{\frac{1}{2}}(v)u_{x}|^{2}&=\big|(\gamma^{\frac{1}{2}}(v)u)_{x}-\frac{1}{2}\frac{\gamma'(v)}{\gamma^{\frac{1}{2}}(v)}uv_{x}\big|^{2}\\
			&\geq\frac{1}{2}|(\gamma^{\frac{1}{2}}(v)u)_{x}|^{2}-\frac{1}{4}\frac{|\gamma'(v)|^{2}}{\gamma(v)}u^{2}v_{x}^{2}
		\end{split}
	\end{equation}
	for all $t\in(0,T_{\max})$. Inserting above inequality into ({\ref{u-2-ineq-}}), we get 
	\begin{equation}{\label{u-1}}
		\begin{split}
			& \frac{d}{dt}\int_{0}^{L}u^{2}dx+\frac{1}{2}\int_{0}^{L}|(\gamma^{\frac{1}{2}}(v)u)_{x}|^{2}dx+\mu\int_{0}^{L}u^{2}dx\\&\leq\frac{5}{4}\int_{0}^{L}\frac{|\gamma'(v)|^{2}}{\gamma(v)}u^{2}v_{x}^{2}dx+r(||u_{0}||_{L^{1}}+\frac{r}{\mu}L)
		\end{split}
	\end{equation}
	for all $t\in(0,T_{\max})$. From hypothesis (H), we can find a constant $M>0$ such that
	\begin{equation}{\label{H-hypothesis}}
		\begin{split}
			\frac{|\gamma'(v)|}{\gamma(v)}\leq M\,\,\text{for all}\,\,v\geq0\,\text{and}\,\,t\in(0,T_{\max}).
		\end{split}
	\end{equation}
	Using ({\ref{H-hypothesis}}) and the H\"{o}lder inequality, we deduce from ({\ref{u-1}}) 
	\begin{equation}
		\begin{split}
			& \frac{d}{dt}\int_{0}^{L}u^{2}dx+\frac{1}{2}\int_{0}^{L}|(\gamma^{\frac{1}{2}}(v)u)_{x}|^{2}dx+\mu\int_{0}^{L}u^{2}dx\\&\leq\frac{5}{4}\int_{0}^{L}\frac{|\gamma'(v)|^{2}}{\gamma(v)}u^{2}v_{x}^{2}dx+r(||u_{0}||_{L^{1}}+\frac{r}{\mu}L)\\
			&\leq\frac{5M^{2}}{4}\int_{0}^{L}|\gamma^{\frac{1}{2}}(v)u|^{2}v_{x}^{2}dx+r(||u_{0}||_{L^{1}}+\frac{r}{\mu}L)
		\end{split}
	\end{equation}
	Furthermore, we next use one-dimensional Gagliardo-Nirenberg inequality, the facts $\gamma(v)\leq\gamma(0)=c_{2}$ by $\gamma(v)\in C^{3}([0,\infty)])$ and $||u||_{L^{1}}\leq c_{6}$ to deduce that 
	\begin{equation}
		\begin{split}
			\Big(\int_{0}^{L}|\gamma^{\frac{1}{2}}(v)u|^{2}\Big)&=||\gamma^{\frac{1}{2}}(v)u||_{L^{2}}^{2}\\
			&\leq C_{3}\Big(||(\gamma^{\frac{1}{2}}(v)u)_{x}||_{L^{2}}^{\frac{2}{3}}||\gamma^{\frac{1}{2}}(v)u||_{L^{1}}^{\frac{4}{3}}+||\gamma^{\frac{1}{2}}(v)u||_{L^{1}}^{2}\Big)\\
			&\leq C_{4}\Big(||(\gamma^{\frac{1}{2}}(v)u)_{x}||_{L^{2}}^{\frac{2}{3}}+1\Big)
		\end{split}
	\end{equation}
	for all $t\in(0,T_{\max})$. Applying one-dimensional Gagliardo-Nirenberg inequality (see Lemma 2.2), we get 
	\begin{equation}
		\begin{split}
			||v_{x}||_{L^{\infty}}^{2}&\leq C_{6}\Big(||v_{xx}||_{L^{2}}||v_{x}||_{L^{2}}+||v_{x}||_{L^{2}}^{2}\Big)
		\end{split}
	\end{equation}
	for all $t\in(0,T_{\max})$. Collecting above inequalities together, we have
	\begin{equation}
		\begin{split}
			\frac{d}{dt}\Big(\int_{0}^{L}u^{2}dx+\int_{0}^{L}v_{x}^{2}dx\Big)+\Big(\int_{0}^{L}u^{2}dx+\int_{0}^{L}v_{x}^{2}dx\Big)\leq C_{9}+\big(\int_{0}^{L}v^{2}_{x}dx\big)^{6}.
		\end{split}
	\end{equation}
	Upon an ODE comparison argument, we shall obtain the desired result.
\end{proof}
\begin{lm} Let the conditions in Lemma 2.1 hold. Then the solution of ({\ref{Model 3}}) fulfills
	\begin{equation}
		\begin{split}
			||u(\cdot,t)||_{H^{1}}+||v(\cdot,t)||_{H^{2}} \leq C(||u_{0}||_{H^{1}}+||v_{0}||_{H^{2}})
		\end{split}
	\end{equation}
	for all $t>0$, where $C>0$ is a constant independent of $t$.
\end{lm}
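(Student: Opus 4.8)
The plan is to bootstrap the uniform bounds $\|u(\cdot,t)\|_{L^2}+\|v(\cdot,t)\|_{H^1}\le C$ from Lemma 2.4 up to $\|u(\cdot,t)\|_{H^1}+\|v(\cdot,t)\|_{H^2}\le C$ in two stages: first extract \emph{uniform-in-time space--time} bounds for the next-order quantities $\|u_x\|_{L^2}^2$, $\|v_{xx}\|_{L^2}^2$ and $\|v_x\|_{L^\infty}^2$, and then close a higher-order energy identity by means of a uniform Gr\"onwall lemma. Two structural facts are used throughout. Since $[0,L]$ is one-dimensional, $H^1\hookrightarrow L^\infty$, so Lemma 2.4 already gives $0\le v\le V_\ast$ with $V_\ast$ independent of $t$; hence $\gamma,|\gamma'|,|\gamma''|$ are bounded along the orbit and $\gamma_\ast:=\min_{[0,V_\ast]}\gamma>0$, i.e.\ the motility is non-degenerate. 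Moreover the no-flux condition $(\gamma(v)u)_x=0$ together with $v_x=0$ on $\partial\Omega$ forces $u_x=0$ on $\partial\Omega$, so both $u$ and $v$ obey homogeneous Neumann conditions and no boundary terms survive the integrations by parts below; differentiating the $v$-equation in $x$ then also gives $v_{xxx}=0$ on $\partial\Omega$.

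\textbf{Step 1 (space--time estimates).} Combining $\|u\|_{L^2}\le C$ with the inequality $\frac{d}{dt}\|v_x\|_{L^2}^2+\|v_{xx}\|_{L^2}^2+2\|v_x\|_{L^2}^2\le\|u\|_{L^2}^2$ (obtained by testing the $v$-equation with $-v_{xx}$, as in the proof of Lemma 2.4) and integrating over $(t,t+1)$ using $\|v_x(\cdot,t)\|_{L^2}\le C$ gives $\int_t^{t+1}\|v_{xx}\|_{L^2}^2\,ds\le C$; the one-dimensional Gagliardo--Nirenberg inequality of Lemma 2.2 then yields $\int_t^{t+1}\|v_x\|_{L^\infty}^2\,ds\le C$. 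Inserting this into the differential inequality for $\int_0^L u^2$ derived in the proof of Lemma 2.4 — whose right-hand side, after invoking $\|u\|_{L^2}\le C$ and the bounds on $\gamma$, is at most $C\|v_x\|_{L^\infty}^2+C$ — and integrating in time produces $\int_t^{t+1}\|(\gamma^{1/2}(v)u)_x\|_{L^2}^2\,ds\le C$; combined with the identity $\gamma^{1/2}u_x=(\gamma^{1/2}u)_x-\tfrac{1}{2}\frac{\gamma'}{\gamma^{1/2}}uv_x$ and $\gamma\ge\gamma_\ast$ this gives the missing $\int_t^{t+1}\|u_x\|_{L^2}^2\,ds\le C$, and hence, again by Lemma 2.2, $\int_t^{t+1}\|u\|_{L^\infty}^2\,ds\le C$.

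\textbf{Step 2 (higher-order energy and uniform Gr\"onwall).} Testing the $u$-equation with $-u_{xx}$, expanding $(\gamma(v)u)_{xx}=\gamma u_{xx}+2\gamma'v_xu_x+(\gamma''v_x^2+\gamma'v_{xx})u$, and applying Young's inequality with $\gamma\ge\gamma_\ast$ to absorb $\gamma_\ast\|u_{xx}\|_{L^2}^2$ on the left gives an inequality of the form $\frac{d}{dt}\|u_x\|_{L^2}^2+\gamma_\ast\|u_{xx}\|_{L^2}^2\le C\big(\|v_x\|_{L^\infty}^2+\|v_{xx}\|_{L^2}^2\big)\|u_x\|_{L^2}^2+C\big(\|v_x\|_{L^\infty}^4+\|v_{xx}\|_{L^2}^2+1\big)$, the only genuinely coupled term $\int_0^L\gamma'v_{xx}uu_{xx}$ being handled via $\|u\|_{L^\infty}^2\le C(\|u_x\|_{L^2}+1)$. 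Adding the companion estimate obtained by pairing $(v_t)_{xx}$ with $v_{xx}$ (equivalently, testing the $v$-equation with $v_{xxxx}$), namely $\frac{d}{dt}\|v_{xx}\|_{L^2}^2+\|v_{xxx}\|_{L^2}^2+2\|v_{xx}\|_{L^2}^2\le\|u_x\|_{L^2}^2$, the quantity $y(t):=\|u_x(\cdot,t)\|_{L^2}^2+\|v_{xx}(\cdot,t)\|_{L^2}^2$ satisfies $y'\le g(t)y+h(t)$ with $g,h\ge0$ and $\int_t^{t+1}(g+h)\,ds\le C$ by Step 1; since $\int_t^{t+1}y\,ds\le C$ as well, the uniform Gr\"onwall lemma yields $y(t)\le C$ for all $t\ge1$, while on $[0,1]$ boundedness follows from the local theory and continuity. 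Together with Lemma 2.4 this is the claim.

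The step I expect to be the main obstacle is Step 1, specifically closing the space--time bound $\int_t^{t+1}\|v_{xx}\|_{L^2}^2\le C$: this is the one-dimensional counterpart of the $\int_t^{t+\tau}\int_\Omega|\Delta v|^2$ difficulty flagged in Remark 1.2 for the two-dimensional problem, and it succeeds here only because Lemma 2.4 provides a \emph{uniform $L^2$} (not merely $L^1$) bound on $u$, which itself relies on the one-dimensional Gagliardo--Nirenberg structure. A subsidiary but delicate point is that every ``bad'' coefficient arising in Step 2 must be uniformly time-integrable, which dictates how the derivatives are distributed among the nonlinear terms (for instance always trading $\|u\|_{L^\infty}$ for $\|u_x\|_{L^2}^{1/2}$ rather than for $\|u_{xx}\|_{L^2}$), so that the dissipation $\gamma_\ast\|u_{xx}\|_{L^2}^2$ is never consumed more than once.
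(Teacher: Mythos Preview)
Your argument is correct and follows a genuinely different route from the paper's. The paper proceeds by testing the $u$-equation against $u^{k-1}$ to obtain uniform $L^k$-bounds on $u$ for every $k\ge 2$; this rests on a pointwise bound $\|v_x(\cdot,t)\|_{L^\infty}\le K_\ast$ that the paper asserts from $\|v\|_{H^1}\le C$ and Sobolev embedding, and the argument stops at the $L^k$ stage without explicitly deriving the stated $H^1$-bound on $u$ or $H^2$-bound on $v$. You instead bypass both the $L^k$ bootstrap and any pointwise control of $v_x$: by first extracting the space--time bounds $\int_t^{t+1}\|v_{xx}\|_{L^2}^2\,ds\le C$ and $\int_t^{t+1}\|u_x\|_{L^2}^2\,ds\le C$ from the differential inequalities already present in the proof of Lemma~2.4, you feed these into a direct $H^1\times H^2$ energy identity (test $u_t$ with $-u_{xx}$, test $v_t$ with $v_{xxxx}$) and close with the uniform Gr\"onwall lemma. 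This is the more self-contained route: it uses only what Lemma~2.4 genuinely provides --- namely $\|v\|_{L^\infty}\le C$, not $\|v_x\|_{L^\infty}\le C$ --- and it reaches the claimed $H^1\times H^2$ bound in one pass. The paper's approach, were the pointwise $v_x$ bound in hand, would have the advantage of giving $L^k$-control for all $k$ at once, which is convenient for the subsequent H\"older regularity (Lemma~2.6); your route instead yields $u\in L^\infty$ only a posteriori, via $H^1\hookrightarrow L^\infty$.
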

\begin{proof} 
	Since $||u||_{L^{2}}+||v||_{H^{1}}\leq c_{2}$ (see ({\ref{u-L-2}}) Lemma 2.4), then we we can use the Sobolev embedding to pick constants $c_{3},c_{4}>0$ such that
	\begin{equation}{\label{v-est}}
		\begin{split}            ||v||_{L^{\infty}}+||v_{x}||_{L^{\infty}}\leq K_{*}.
		\end{split}
	\end{equation}
	Using the hypothesis (H) and ({\ref{v-est}}), we obtain
	\begin{equation}{\label{H-condi}}
		\begin{split}
			\gamma(v)\geq\gamma(K_{\star})>0\,\,\text{and}\,\,|\gamma'(v)|\leq c_{3}
		\end{split}
	\end{equation}
	for all $t\in(0,T_{\max})$. Next we use $u^{k-1}$ with $k\geq2$ as a test function for the $u-$ equation in ({\ref{Model 3}}), and integrating the result equation by parts to infer that 
	\begin{equation}
		\begin{split}
			& \frac{1}{k}\frac{d}{dt}\int_{0}^{L}u^{k}dx+\mu\int_{0}^{L}u_{x}^{k}dx+(k-1)\int_{0}^{L}\gamma(v)u^{k-2}u_{x}^{2}dx\\
			&=-(k-1)\int_{0}^{L}\gamma'(v)u^{k-1}u_{x}v_{x}dx+r\int_{0}^{L}u^{k-1}dx\,\,\text{for all}\,\,t\in(0,T_{\max}),
		\end{split}
	\end{equation}
	which combined with ({\ref{H-condi}}), the Young inequality and the H\"{o}lder inequality entails that 
	\begin{equation}
		\begin{split}
			& \frac{1}{k}\frac{d}{dt}\int_{0}^{L}u^{k}dx+\mu\int_{0}^{L}u^{k}dx+(k-1)\gamma(K_{\star})\int_{0}^{L}u^{k-2}u_{x}^{2}dx\\
			&\leq c_{3}(k-1)\int_{0}^{L}u^{k-1}|u_{x}||v_{x}|dx+r\int_{0}^{L}u^{k-1}dx\\
			&\leq\frac{(k-1)\gamma(K_{\star})}{2}\int_{0}^{L}u^{k-2}u_{x}^{2}+\frac{c_{3}^{2}(k-1)}{2\gamma(K_{\star})}\int_{0}^{L}u^{k}v_{x}^{2}+\frac{\mu}{2}\int_{0}^{L}u^{k}+c_{5}
		\end{split}
	\end{equation}
	for all $t\in(0,T_{\max})$ and for all $k\geq2$. Hence 
	\begin{equation}
		\begin{split}
			& \frac{1}{k}\frac{d}{dt}\int_{0}^{L}u^{k}dx+\frac{\mu}{2}\int_{0}^{L}u^{k}dx+\frac{(k-1)\gamma(K_{\star})}{2}\int_{0}^{L}u^{k-2}u_{x}^{2}dx\\&\leq\frac{c_{3}^{2}(k-1)}{2\gamma(K_{\star})}\int_{0}^{L}u^{k}v_{x}^{2}dx+c_{5}\\
			&\leq\frac{c_{3}^{2}(k-1)}{2\gamma(K_{\star})}\Big(\int_{0}^{L}u^{2k}dx\Big)^{\frac{1}{2}}\Big(\int_{0}^{L}v_{x}^{4}dx\Big)^{\frac{1}{2}}+c_{5}
		\end{split}
	\end{equation}
	for all $t\in(0,T_{\max})$. Next we use one-dimensional Gagliardo-Nirenberg inequality and the Young inequality to obtain that 
	\begin{equation}
		\begin{split}
			\frac{c_{3}^{2}k(k-1)}{2\gamma(K_{\star})}\int_{0}^{L}u^{k}v_{x}^{2}&\leq\frac{c_{3}^{2}k(k-1)}{2\gamma(K_{\star})}\big(\int_{0}^{L}u^{2k}\big)^{\frac{1}{2}}\big(\int_{0}^{L}v_{x}^{4}dx\big)^{\frac{1}{2}}\\
			&\leq\frac{c_{4}^{2}c_{3}^{2}k(k-1)}{2\gamma(K_{\star})}\big(\int_{0}^{L}u^{2k}\big)^{\frac{1}{2}}\\
			&\leq c_{6}\big(||(u^{\frac{k}{2}})_{x}||_{L^{2}}^{2(1-\frac{1}{k})}||u^{\frac{k}{2}}||_{L^{\frac{4}{k}}}^{\frac{2}{k}}+||u^{\frac{k}{2}}||_{L^{\frac{4}{k}}}^{2}\big)\\
			&\leq\frac{2(k-1)\gamma(K_{\star})}{k}||(u^{\frac{k}{2}})_{x}||_{L^{2}}^{2}+\frac{2\gamma(K_{\star})}{k}\Big(\frac{c_{1}c_{5}}{2\gamma(K_{\star})}\Big)^{k}+c_{5}c_{1}^{k}.
		\end{split}
	\end{equation}
	Collecting above inequalities together, we have 
	\begin{equation}
		\begin{split}
			\frac{d}{dt}\int_{0}^{L}u^{k}dx+\mu\int_{0}^{L}u^{k}dx\leq +\frac{2\gamma(K_{\star})}{k}\Big(\frac{c_{1}c_{5}}{2\gamma(K_{\star})}\Big)^{k}+c_{5}c_{1}^{k},
		\end{split}
	\end{equation}
	which entails that
	\begin{equation}
		\begin{split}
			\int_{0}^{L}u^{k}dx\leq\int_{0}^{L}u_{0}^{k}(x)+\frac{2\gamma(K_{\star})}{k}\Big(\frac{c_{1}c_{5}}{2\gamma(K_{\star})}\Big)^{k}+c_{5}c_{1}^{k}.
		\end{split} 
	\end{equation}
	We complete the proof.
\end{proof}
Next we will obtain the asymptotic behavior of solutions. We use the standard parabolic property to infer the regularity of $v$ and $u$ as follows.
\begin{lm} Let $(u,v)$ be the nonnegative global classical solution of ({\ref{Model 3}}) . Then there exist $\iota\in(0,1)$ and $C>0$ such that 
	\begin{equation}
		\begin{split}
			||u(\cdot,t)||_{C^{\iota,\frac{\iota}{2}}(\overline{[0,L]}\times[t,t+1])}\leq C
		\end{split}
	\end{equation}
	for all $t\geq1$ and 
	\begin{equation}
		\begin{split}
			||v(\cdot,t)||_{C^{2+\iota,1+\frac{\iota}{2}}(\overline{[0,L]}\times[t,t+1])}\leq C
		\end{split}
	\end{equation}
	for all $t\geq1$.
\end{lm}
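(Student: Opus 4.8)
The plan is to feed the uniform-in-time bounds of Lemma 2.5 into standard parabolic regularity theory, treating the $u$-equation in divergence form and the $v$-equation as a forced heat equation. \textbf{Step 1: pointwise consequences of Lemma 2.5.} Since we work on $[0,L]\subset\mathbb R$, the one–dimensional embeddings $H^{1}\hookrightarrow C^{1/2}$ and $H^{2}\hookrightarrow C^{1,1/2}$ turn $\|u(\cdot,t)\|_{H^{1}}+\|v(\cdot,t)\|_{H^{2}}\le C$ into
\begin{equation}
\|u(\cdot,t)\|_{L^{\infty}}+\|v(\cdot,t)\|_{L^{\infty}}+\|v_{x}(\cdot,t)\|_{L^{\infty}}\le C\qquad\text{for all }t>0,
\end{equation}
with $C$ independent of $t$; in particular (\ref{H-condi}) yields $0<\gamma(K_{\star})\le\gamma(v)\le\gamma(0)$ and $|\gamma'(v)|\le C$ uniformly in $t$. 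I would also integrate the differential inequality (\ref{u-1}) over $(t,t+1)$ and use the identity $\gamma^{1/2}(v)u_{x}=(\gamma^{1/2}(v)u)_{x}-\tfrac12\gamma'(v)\gamma^{-1/2}(v)uv_{x}$ to extract the uniform local energy bound $\int_{t}^{t+1}\!\!\int_{0}^{L}u_{x}^{2}\,dx\,ds\le C$.

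\textbf{Step 2: Hölder continuity of $u$.} Rewriting the first equation of (\ref{Model 3}) in divergence form, $u_{t}=\partial_{x}\big(\gamma(v)u_{x}+\gamma'(v)v_{x}u\big)+r-\mu u$, I would regard it as a linear uniformly parabolic equation for $u$ whose principal coefficient $\gamma(v)$ lies between $\gamma(K_{\star})$ and $\gamma(0)$, whose drift $\gamma'(v)v_{x}$ is in $L^{\infty}$, and whose right-hand side $r-\mu u$ is in $L^{\infty}$, all with bounds uniform in $t$ by Step 1. The De Giorgi--Nash--Moser estimate for divergence-form parabolic equations (valid up to the boundary under homogeneous Neumann conditions, e.g.\ after even reflection across $x=0,L$) then produces $\iota\in(0,1)$ and $C>0$, depending only on $L$, on the ellipticity and $L^{\infty}$ bounds above and on $\int_{t}^{t+1}\!\!\int u_{x}^{2}$, such that $\|u\|_{C^{\iota,\iota/2}(\overline{[0,L]}\times[t,t+1])}\le C$ for all $t\ge1$. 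Since $t\ge1$, one applies this on the slightly larger cylinder $\overline{[0,L]}\times[t-\tfrac12,t+1]\subset\overline{[0,L]}\times(0,\infty)$, so no initial-data regularity is used; a perfectly good alternative is to first obtain $u\in L^{p}((t,t+1);W^{1,p})$ for large $p$ by parabolic $L^{p}$ theory applied to the same equation and then embed into a Hölder space.

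\textbf{Step 3: Schauder estimate for $v$.} Once $u\in C^{\iota,\iota/2}$ uniformly in $t$, the second equation $v_{t}=v_{xx}-v+u$ has right-hand side $u-v\in C^{\iota,\iota/2}$ with $t$-uniform norm (using also $v\in C^{\iota,\iota/2}$ from Step 1). The interior-in-time parabolic Schauder estimate, again up to the spatial boundary with the homogeneous Neumann condition, then yields $v\in C^{2+\iota,1+\iota/2}(\overline{[0,L]}\times[t,t+1])$ with a bound independent of $t\ge1$, which is the second assertion; if necessary one shrinks $\iota$ so that a single exponent serves both steps.

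\textbf{Expected main obstacle.} The only genuinely delicate point I anticipate is the \emph{uniformity in $t$} of both the Hölder exponent $\iota$ and the constant in Step 2: the estimate must be invoked on the whole family of time-translated cylinders, and one must verify that every structural quantity it depends on — the ellipticity constant $\gamma(K_{\star})$, the sup-norms of $\gamma(v)$, $\gamma'(v)v_{x}$, $r-\mu u$, and the local energy $\int_{t}^{t+1}\!\!\int u_{x}^{2}$ — is controlled independently of $t$. This is precisely what Step 1 (together with Lemmas 2.3--2.5) delivers; the remainder is a routine application of textbook parabolic estimates.
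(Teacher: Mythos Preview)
Your proposal is correct and is precisely the approach the paper has in mind: the paper gives no explicit proof of this lemma at all, merely prefacing it with ``We use the standard parabolic property to infer the regularity of $v$ and $u$ as follows,'' so your Steps~1--3 (uniform $L^\infty$ bounds from Lemma~2.5 and Sobolev embedding, then De~Giorgi--Nash--Moser for the divergence-form $u$-equation, then Schauder for the $v$-equation) supply exactly the details the authors suppress. Your identification of the only delicate point --- uniformity in $t$ of the structural constants --- is also apt and correctly resolved by Step~1.
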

{\bf{Proof of Theorem 1.1.}} Using Lemma 3.2 and parabolic regularity theorem, we can obtain the proof of Theorem 1.1.
\section{Large time behavior}
\subsection{Convergence of solutions.} Inspired by some ideas from \cite{H. Y. J. Y. K. Z. W}, we shall construct a Lyapounv functional to obtain the convergence of solutions. Since the linear function $r-\mu u$ is weaker than $\mu u(1-u)$ in \cite{H. Y. J. Y. K. Z. W}, then we construct the Lyapunov function $\mathcal{F}(t):=\int_{0}^{L}(u-\frac{r}{\mu})^{2}+\int_{0}^{L}(v-\frac{r}{\mu})^{2}$.

\begin{lm} Let $(u,v)$ be the solution of the system ({\ref{Model 3}}) with $r,\mu>0$. Then there exists a 
	\begin{equation}{\label{mu-def}}
		\begin{split}
			\mu>\frac{H_{0}}{4}
		\end{split}
	\end{equation}
	with $H_{0}:=\max_{0\leq v\leq\infty}\frac{|\gamma'(v)|^{2}}{\gamma(v)}$.
	then there exist two positive constants $\sigma,D>0$  such that the nonnegative function  
	\begin{equation}{\label{F-def}}
		\mathcal{F}(t):=\int_{0}^{L}(u-\frac{r}{\mu})^{2}+\int_{0}^{L}(v-\frac{r}{\mu})^{2}
	\end{equation}
	for all $t\geq0$ fulfills 
	\begin{equation}{\label{F-inequality}}
		\mathcal{F}'(t)\leq-D\mathcal{F}(t)\,\,\text{for all}\,\,t\in(0,T_{\max}).
	\end{equation}
\end{lm}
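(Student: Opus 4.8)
The plan is a direct Lyapunov computation: differentiate $\mathcal F$ along \eqref{Model 3}, integrate by parts, and close the estimate by absorbing the chemotactic cross-term via Young's inequality together with the uniform-in-time regularity already established (Lemmas 2.5 and 3.1), the threshold $\mu>H_0/4$ being consumed exactly in this absorption. Writing $w:=u-\tfrac r\mu$ and $z:=v-\tfrac r\mu$, one has $r-\mu u=-\mu w$ and $-v+u=w-z$, so the system becomes $w_t=(\gamma(v)u)_{xx}-\mu w$, $z_t=z_{xx}-z+w$ under homogeneous Neumann data, and $\mathcal F(t)=\int_0^L w^2+\int_0^L z^2$. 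Testing the $w$-equation by $w$ and integrating by parts gives
\[
\tfrac12\tfrac{d}{dt}\!\int_0^L\! w^2+\mu\!\int_0^L\! w^2+\int_0^L\!\gamma(v)u_x^2=-\int_0^L\!\gamma'(v)\,u\,u_x\,v_x,
\]
and, exactly as in \eqref{Y-ine} (optionally together with the pointwise identity $\gamma^{1/2}(v)u_x=(\gamma^{1/2}(v)u)_x-\tfrac12\gamma'(v)\gamma^{-1/2}(v)uv_x$ used in the proof of Lemma 2.4), Young's inequality turns the right-hand side into a multiple of $\int_0^L\frac{|\gamma'(v)|^2}{\gamma(v)}u^2v_x^2\le H_0\int_0^L u^2v_x^2$; keeping track of the constants one reaches
\[
\tfrac{d}{dt}\!\int_0^L\! w^2+2\mu\!\int_0^L\! w^2\le \tfrac{H_0}{2}\!\int_0^L\! u^2v_x^2 .
\]
Testing the $z$-equation by $z$ and applying Young's inequality to $\int_0^L wz$ yields the much simpler
\[
\tfrac{d}{dt}\!\int_0^L\! z^2+2\!\int_0^L\! v_x^2+\int_0^L\! z^2\le \int_0^L\! w^2 .
\]

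The decisive point is to bound $\int_0^L u^2v_x^2$ — precisely the quantity the Remark identifies as the one for which the weak degradation $-\mu u$ gives no help, since no dissipative term of matching strength stands on the left of the $w$-identity. Here I would use that in one dimension the bounds $\|u(\cdot,t)\|_{L^\infty}$ and $\|v(\cdot,t)\|_{W^{2,\infty}}\le K$ are uniform in $t$ (Lemma 2.5, Lemma 3.1), and peel off the equilibrium value by writing $u^2=w^2+2\tfrac r\mu w+\tfrac{r^2}{\mu^2}$ with $\int_0^L w=e^{-\mu t}\bigl(\int_0^L u_0-\tfrac r\mu L\bigr)$ (the $u$-mass only tends to $\tfrac r\mu L$, it does not equal it), so that
\[
\int_0^L\! u^2v_x^2\le \|v_x\|_{L^\infty}^2\!\int_0^L\! w^2+\tfrac{r^2}{\mu^2}\!\int_0^L\! v_x^2+Ce^{-\mu t}.
\]
The middle term feeds $\tfrac{H_0}{2}\tfrac{r^2}{\mu^2}\int v_x^2$ into the $w$-inequality, which is absorbed by the $2\int v_x^2$ of the $z$-identity; the factor $\tfrac{H_0}{2}\|v_x\|_{L^\infty}^2$ multiplying $\int w^2$ is absorbed into $2\mu\int w^2$, for large $t$ with room to spare since $\|v_x\|_{L^\infty}(t)\to0$ — this last fact I would obtain by upgrading $\int_t^{t+1}\!\int v_x^2\to0$, itself a consequence of the energy identities of Section 2, through the uniform Hölder estimate of Lemma 3.1. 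Forming a linear combination $\int w^2+\kappa\int z^2$ with $\kappa>0$ chosen so the $\int v_x^2$-terms cancel, then adding a small further multiple of the $z$-inequality to reinstate a $-\int z^2$ term, one obtains, under $\mu>H_0/4$, a bound $\tfrac{d}{dt}\bigl(\int w^2+\kappa_1\int z^2\bigr)\le -2D\bigl(\int w^2+\int z^2\bigr)$; as $\int w^2+\kappa_1\int z^2$ is equivalent to $\mathcal F$, Grönwall's inequality gives the exponential decay \eqref{F-inequality} (first on $[t_0,\infty)$; on $[0,t_0]$ it follows from continuity and boundedness of $\mathcal F$, which is all that Theorem 1.2 needs).

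I expect the estimate of $\int_0^L u^2v_x^2$ to be the only genuine obstacle. Bounding it crudely by $\|u\|_{L^\infty}^2\int v_x^2$ would spend the large uniform $L^\infty$-bound of $u$ rather than the equilibrium value $\tfrac r\mu$, so one is forced to separate the equilibrium and lean on the asymptotic decay of $\|v_x\|_{L^\infty}$; it is in simultaneously keeping the coefficient of $\int v_x^2$ below the available dissipation $2\int v_x^2$ and the coefficient of $\int w^2$ strictly negative that the explicit threshold $\mu>H_0/4$ is used. A secondary, purely bookkeeping nuisance is the exponentially small remainder produced every time an $u^2$-integral is converted to a $w^2$-integral, which must be carried (harmlessly) through the final Grönwall step.
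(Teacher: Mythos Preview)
Your route differs substantially from the paper's and has a real gap precisely where you yourself flag ``the only genuine obstacle''. The paper never attempts to estimate $\int_0^L u^2v_x^2$; instead it builds the weight $\sigma$ into the functional from the outset, computes
\[
\tfrac12\tfrac{d}{dt}\Bigl[\int_0^L(u-\tfrac r\mu)^2+\sigma\int_0^L(v-\tfrac r\mu)^2\Bigr]=I_1+I_2,
\]
and recognizes each $I_i$ as a pointwise quadratic form: $I_1=-\int_0^L X^TAX$ in $X=(u_x,v_x)^T$ with $A=\bigl(\begin{smallmatrix}\gamma(v)&\gamma'(v)/2\\ \gamma'(v)/2&\sigma\end{smallmatrix}\bigr)$, and $I_2=-\int_0^L Y^TBY$ in $Y=(u-\tfrac r\mu,\,v-\tfrac r\mu)^T$ with $B=\bigl(\begin{smallmatrix}\mu&-\sigma/2\\-\sigma/2&\sigma\end{smallmatrix}\bigr)$. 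Then $A\ge0$ iff $\sigma\ge H_0/4$ and $B>0$ iff $\mu>\sigma/4$; a single choice of $\sigma$ makes both hold, yielding $\mathcal F'\le-D\mathcal F$ directly, the threshold emerging purely algebraically with no asymptotic information needed as input.

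Your scheme, by contrast, requires $\|v_x(\cdot,t)\|_{L^\infty}\to0$ as an \emph{input} in order to absorb the $\|v_x\|_{L^\infty}^2\int_0^L w^2$ contribution. You derive this from ``$\int_t^{t+1}\!\int v_x^2\to0$, itself a consequence of the energy identities of Section~2'', but those identities supply only uniform \emph{boundedness} of $\int_t^{t+1}\!\int v_x^2$, not decay; decay of $v_x$ is exactly what convergence to $(\tfrac r\mu,\tfrac r\mu)$ would deliver, so the reasoning is circular. Separately, the bound $2\tfrac r\mu\int_0^L w\,v_x^2\le Ce^{-\mu t}$ via $\int_0^L w=e^{-\mu t}\bigl(\int_0^L u_0-\tfrac r\mu L\bigr)$ is invalid: $w$ changes sign, so $\int_0^L w\,v_x^2$ is controlled by $\int_0^L|w|$, not by $\int_0^L w$, and $\int_0^L|w|$ carries no a~priori decay. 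Finally, even granting these steps, the coefficient multiplying $\int_0^L v_x^2$ that comes from the $(r/\mu)^2$ piece of $u^2$ is of order $H_0 r^2/\mu^2$; matching it against $2\kappa\int_0^L v_x^2$ and then absorbing the resulting $\kappa\int_0^L w^2$ into $2\mu\int_0^L w^2$ forces a condition of the type $\mu^3\gtrsim H_0r^2$, so the stated threshold $\mu>H_0/4$ does not fall out of your argument.
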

\begin{proof} We rewrite the $u-$ equation of the system ({\ref{Model 3}}) as 
	\begin{equation}{\label{u-e}}
		\begin{split}
			(u-\frac{r}{\mu})_{t}=\big(\gamma(v)u_{x}+\gamma'(v)uv_{x}\big)_{x}+r-\mu u.
		\end{split}   
	\end{equation}
	Then we multiply ({\ref{u-2-def}}) by $u-\frac{r}{\mu}$ and integrate by parts to infer that 
	\begin{equation}{\label{u-2-def}}
		\begin{split}
			&\frac{d}{dt}\int_{0}^{L}\big(u-\frac{r}{\mu}\big)^{2}dx+\int_{0}^{L}\gamma(v)u_{x}^{2}dx\\
			&=-\int_{0}^{L}\gamma'(v)uu_{x}v_{x}dx+\mu\int_{0}^{L}(u-\frac{r}{\mu})^{2}\\
		\end{split}
	\end{equation}
	for all $t>0$. Multiplying the $v-$ equation of the system ({\ref{Model 3}}) and integrating by parts, we deduce 
	\begin{equation}{\label{v-r-mu-def}}
		\begin{split}
			\frac{1}{2}\frac{d}{dt}\int_{0}^{L}(v-\frac{r}{\mu})^{2}&=\int_{0}^{L}(v-\frac{r}{\mu})\big[v_{xx}+(u-\frac{r}{\mu})-(v-\frac{r}{\mu})\big]dx\\
			&=-\int_{0}^{L}v_{x}^{2}dx-\int_{0}^{L}(v-\frac{r}{\mu           })^{2}dx+\int_{0}^{L}(v-\frac{r}{\mu})(u-\frac{r}{\mu})dx.
		\end{split}
	\end{equation}
	Multiplying ({\ref{v-r-mu-def}}) by a constant $\sigma>0$ and adding the result to ({\ref{u-2-def}}), we infer
	\begin{equation}
		\begin{split}
			&\frac{1}{2}\frac{d}{dt}\int_{0}^{L}\Big[(u-\frac{r}{\mu})^{2}+\sigma(v-\frac{r}{\mu})^{2}\Big]dx\\&=-\int_{0}^{L}\gamma(v)u_{x}^{2}dx-\sigma\int_{0}^{L}v_{x}^{2}dx+\int_{0}^{L}\gamma'(v)u_{x}v_{x}dx\\
			&\quad-\mu\int_{0}^{L}(u-\frac{r}{\mu})^{2}dx-\sigma\int_{0}^{L}(v-\frac{r}{\mu})^{2}dx\\&\quad+\sigma\int_{0}^{L}(u-\frac{r}{\mu})(v-\frac{r}{\mu})dx\\
			&=-\int_{0}^{L}\big(\gamma(v)u_{x}^{2}+\sigma v_{x}^{2}+\gamma'(v)u_{x}v_{x}\big)dx\\
			&\quad+\int_{0}^{L}\big[-\mu(u-\frac{r}{\mu})^{2}-\sigma(v-\frac{r}{\mu})^{2}+\sigma(u-\frac{r}{\mu})(v-\frac{r}{\mu})\big]dx\\
			&:=I_{1}+I_{2}.
		\end{split}
	\end{equation}
	For $I_{1}$, we can rewrite it as 
	\begin{equation}
		\begin{split}
			I_{1}=-X^{T}AX,\,\,X=\begin{bmatrix} u_{x}\\ v_{x }\end{bmatrix},\,\,A=\begin{bmatrix}
				\gamma(v) &\frac{\gamma'(v)}{2}\\ \frac{\gamma'(v)}{2}& \sigma \end{bmatrix},
		\end{split}
	\end{equation}
	where $X^{T}$ denotes the transpose of $X$. Hence $A$ is nonnegative definite and, $I_{1}\leq0$ if and only if
	\begin{equation}{\label{sigma-cond}}
		\begin{split}
			\sigma\geq\max_{0\leq v\leq\infty}\frac{|\gamma'(v)|^{2}}{4\gamma(v)}=\frac{H_{0}}{4}.
		\end{split}
	\end{equation}
	Similarly, we can rearrange $I_{2}$ as
	\begin{equation}
		\begin{split}
			I_{2}=-Y^{T}BY,\,\,Y=\begin{bmatrix}
				u-\frac{r}{\mu}\\v-\frac{r}{\mu}
			\end{bmatrix},\,\,B=\begin{bmatrix}
				\mu&-\frac{\sigma}{2}\\-\frac{\sigma}{2}&\sigma\end{bmatrix}.
		\end{split}
	\end{equation}
	Hence $I_{2}$ is positive definite if and only if
	\begin{equation}
		\mu>\frac{\sigma}{4}.
	\end{equation}	
	Therefore under the assumption ({\ref{mu-def}}), we can find a positive constant $\sigma$ fulfilling ({\ref{sigma-cond}}) and thus (\ref{F-inequality}) hold. Since $A$ is nonnegative definite and $B$ is positive definite, using  the definition of $\mathcal{F}(t)$ defined in ({\ref{F-def}}), we find a constant $D>0$  such that
	({\ref{F-inequality}}) holds.
\end{proof}
\begin{lm} Let $(u,v)$ be the solution of system ({\ref{Model 3}}) and $\mu>\frac{H_{0}}{16}$. Then 
	\begin{equation}
		||u(\cdot,t)-\frac{r}{\mu}||_{L^{\infty}}+||v(\cdot,t)-\frac{r}{\mu}||_{L^{\infty}}\leq Ce^{-\lambda t}
	\end{equation}
	for all $t\geq0$.
\end{lm}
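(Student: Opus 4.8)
The plan is to first extract exponential $L^2$-decay of $(u,v)$ towards $(r/\mu,r/\mu)$ from Lemma 3.1, and then to upgrade it to $L^\infty$-decay by interpolating against the uniform-in-time regularity bounds obtained in Section 2. For the first step, observe that when $\mu>\frac{H_0}{16}$ the interval $[\frac{H_0}{4},\,4\mu)$ is nonempty, so the argument of Lemma 3.1 applies with a coupling constant $\sigma$ fixed in $[\frac{H_0}{4},4\mu)$: then $A$ is positive semidefinite and $B$ is positive definite, and \eqref{F-inequality} gives $\mathcal{F}'(t)\le-D\mathcal{F}(t)$ for some $D>0$. Integrating by Gr\"onwall's inequality yields $\mathcal{F}(t)\le\mathcal{F}(0)e^{-Dt}$ for all $t\ge0$, hence, by the definition \eqref{F-def} of $\mathcal{F}$,
\begin{equation*}
	\|u(\cdot,t)-\tfrac{r}{\mu}\|_{L^2([0,L])}+\|v(\cdot,t)-\tfrac{r}{\mu}\|_{L^2([0,L])}\le C_0\,e^{-Dt/2}\qquad\text{for all }t\ge0.
\end{equation*}

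For the second step I would use the uniform bound $\|u(\cdot,t)\|_{H^1([0,L])}+\|v(\cdot,t)\|_{H^2([0,L])}\le C$ from Lemma 2.5; since $r/\mu$ is a constant this yields $\|u(\cdot,t)-\tfrac{r}{\mu}\|_{H^1([0,L])}+\|v(\cdot,t)-\tfrac{r}{\mu}\|_{H^1([0,L])}\le C_1$ uniformly in $t$. Applying the one-dimensional Gagliardo--Nirenberg (Agmon) inequality $\|w\|_{L^\infty([0,L])}\le C_{\mathrm{GN}}\|w\|_{H^1([0,L])}^{1/2}\|w\|_{L^2([0,L])}^{1/2}$ to $w=u-\frac{r}{\mu}$ and to $w=v-\frac{r}{\mu}$, and combining with the $L^2$-estimate above, I obtain
\begin{equation*}
	\|u(\cdot,t)-\tfrac{r}{\mu}\|_{L^\infty([0,L])}+\|v(\cdot,t)-\tfrac{r}{\mu}\|_{L^\infty([0,L])}\le C_{\mathrm{GN}}\,C_1^{1/2}\,C_0^{1/2}\,e^{-Dt/4},
\end{equation*}
which is exactly the assertion with $C:=C_{\mathrm{GN}}C_1^{1/2}C_0^{1/2}$ and $\lambda:=D/4$. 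Alternatively, one could interpolate the $L^2$-decay against the uniform H\"older bounds of Lemma 2.6, obtaining the same conclusion with a rate depending on $\iota$.

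The substance of the proof really lies in its two inputs, not in this assembly. Lemma 3.1 closes only because a \emph{single} coupling constant $\sigma$ can be taken simultaneously $\ge\frac{H_0}{4}$ (so that $A$ is positive semidefinite and the gradient cross term $\int_0^L\gamma'(v)u_xv_x\,dx$ is absorbed) and $<4\mu$ (so that $B$ is positive definite and a genuinely negative definite zeroth-order term remains); such a $\sigma$ exists precisely when $\mu>\frac{H_0}{16}$, which is thus the natural threshold for the Lyapunov argument. The passage from $L^2$ to $L^\infty$, in turn, works only because the $H^1$-bound of Lemma 2.5 (equivalently the H\"older bound of Lemma 2.6) is uniform in $t$; were it allowed to grow, the interpolation would reintroduce a time-dependent prefactor and spoil the exponential rate. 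Granting these, what remains is routine --- Gr\"onwall followed by one interpolation --- and the exponent $\lambda$ is necessarily a fixed fraction of $D$ dictated by the interpolation exponent (here $D/4$). I therefore expect the only mild obstacle to be the bookkeeping that reconciles the threshold $\mu>\frac{H_0}{16}$ stated here with the two one-sided constraints on $\sigma$ coming from Lemma 3.1.
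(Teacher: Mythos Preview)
Your proposal is correct and follows essentially the same route as the paper: exponential $L^2$-decay from the Lyapunov inequality of Lemma~3.1, followed by interpolation against the uniform regularity bounds to upgrade to $L^\infty$. The only cosmetic difference is that the paper interpolates against the $W^{1,\infty}$ bound coming from Lemma~2.6 rather than the $H^1$ bound of Lemma~2.5, and your explicit reconciliation of the threshold $\mu>H_0/16$ with the two constraints $\sigma\ge H_0/4$ and $\sigma<4\mu$ is in fact clearer than the paper's presentation.
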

\begin{proof} By Lemma 3.1, we infer
	\begin{equation}
		\mathcal{F}'(t)\leq-D\mathcal{F}(t)\,\,\text{for all}\,\,t\geq0,
	\end{equation}
	which entails that 
	\begin{equation}
		\int_{0}^{L}(u-\frac{r}{\mu})^{2}dx+\int_{0}^{L}(v-\frac{r}{\mu})^{2}dx\leq Ce^{-\lambda t}
	\end{equation}
	for all $t\geq0$. Using Lemma 2.6, we infer
	\begin{equation}
		||u-\frac{r}{\mu}||_{W^{1,\infty}}+||v(\cdot,t)-\frac{r}{\mu}||_{W^{1,\infty}}\leq c_{3}  
	\end{equation}
	for all $t>1$. Next we apply Gagliardo-Nirenberg inequality to get
	\begin{equation}
		||u(\cdot.t)-\frac{r}{\mu}||_{L^{\infty}}\leq c_{4}||u-\frac{r}{\mu}||_{W^{1,\infty}}^{\frac{1}{2}}||u-\frac{r}{\mu}||_{L^{2}}^{\frac{1}{2}}\leq c_{1}^{\frac{1}{2}}c_{3}^{\frac{1}{2}}c_{4}e^{-\frac{c_{2}}{2}t}
	\end{equation}
	for all $t>t_{0}$. Similarly, we infer
	\begin{equation}
		||u-\frac{r}{\mu}||_{L^{\infty}}+||v-\frac{r}{\mu}||_{L^{\infty}}\leq c_{6}e^{-c_{7}t}
	\end{equation}
	for all $t>t_{0}$. We finish the proof.
\end{proof}
{\bf{Proof of Theorem 1.2.}} By Lemmas 3.2, we can finish the proof of Theorem 1.2.

\section{Simulation and Disscusion}\label{NUm}
In this section, we present numerical simulations of the system \ref{Model 3} in one, two, and three dimensions using a finite difference scheme. We fix the motility function $\gamma(v)$ as 
$$
\gamma(v) = \frac{1}{1+e^{8(v-1)}},
$$
whose derivative $\gamma'(v) = -8e^{8(v-1)}/(1 + e^{8(v-1)})^2$ ensures that $\gamma(v)$ satisfies the required assumptions. According to Theorem , globally bounded solutions exist when $\mu > 0$ and $\gamma > 0$, which is confirmed by our numerical results. To further explore the solution behavior, we also examine the case $\mu = 0$ across different dimensions. For the one-dimensional case, we first consider $\mu = 0$ with initial data given as a small random perturbation around $(1000, 1000)$. As shown in Fig.~\ref{fig_Num_1}, the solution $u(x,t)$ grows with time, eventually blowing up and becoming unbounded, while $v(x,t)$ exhibits similar behavior (omitted for brevity). In contrast, when $\mu > 0$, the solutions remain bounded regardless of the initial conditions. Figs.~\ref{fig_Num_2} and \ref{fig_Num_3} illustrate this for different initial values: with small perturbations around $(\gamma/\mu, \gamma/\mu)$ ($\mu = 0.01$, $\gamma = 1$) for $(u_0(x), v_0(x))$, the solutions initially fluctuate but eventually stabilize to $(\gamma/\mu, \gamma/\mu)$, consistent with Theorem . The same stabilization occurs even for large initial data $(10^4, 10^4)$, reinforcing the theoretical prediction. In summary, while the solution blows up for $\mu = 0$, it converges to the steady state $(\gamma/\mu, \gamma/\mu)$ when $\mu > 0$, demonstrating the critical role of $\mu$ in determining long-term behavior.   
\begin{figure}[!ht]
	\scriptsize
	\begin{center}
		\begin{tabular}{c}
			\includegraphics[width=0.7\textwidth]{./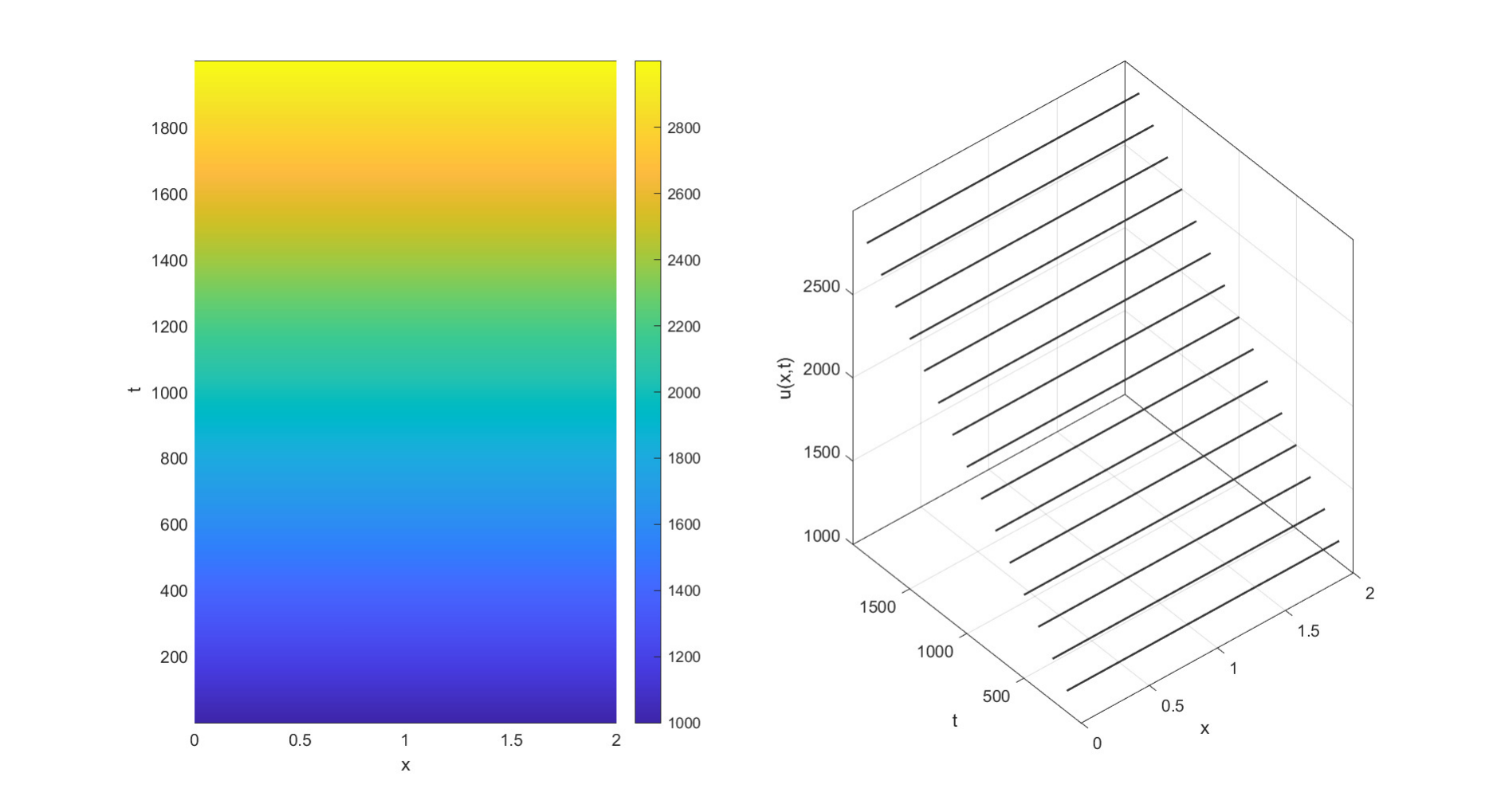}
		\end{tabular}
		\vspace{-0.1cm}
		\caption{\footnotesize Numerical simulation for $u(x,t)$ of the system \ref{Model 3} in one dimensional case in an interval $[0,2]$, where the initial data $(u_0, v_0)$ are chosen as a small random perturbation of $(1000, 1000)$ with $\mu=0$. }\label{fig_Num_1}
	\end{center}
	\vspace{0.2cm}
\end{figure}

\begin{figure}[!ht]
	\scriptsize
	\begin{center}
		\begin{tabular}{c}
			\includegraphics[width=0.9\textwidth]{./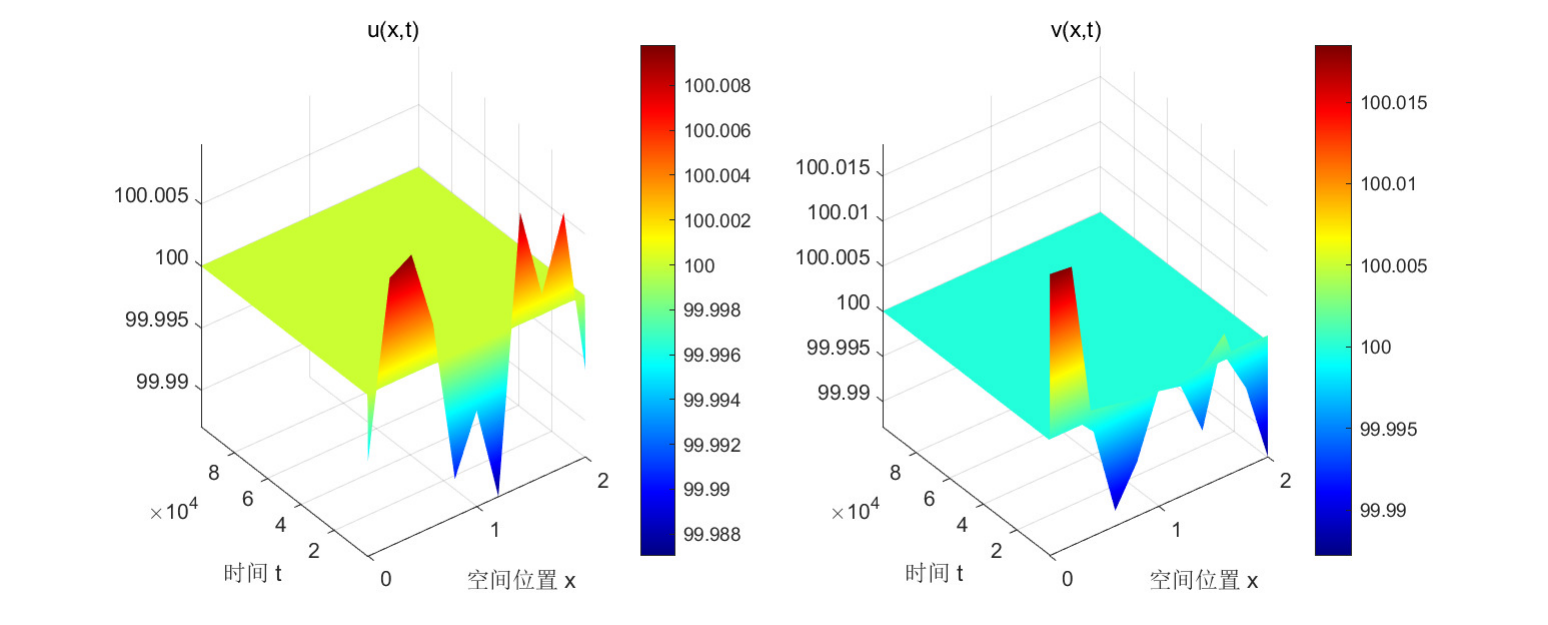}
		\end{tabular}
		\vspace{-0.1cm}
		\caption{\footnotesize Numerical simulation for $u(x,t)$ and $v(x,t)$ of the \ref{Model 3} in one dimensional case in an interval $[0,2]$, where the initial data $(u_0, v_0)$ are chosen as a small random perturbation of $(\gamma/\mu, \gamma/\mu)$ with $\mu=0.01$ and $\gamma = 1$. }\label{fig_Num_2}
	\end{center}
	\vspace{0.2cm}
\end{figure}

\begin{figure}[!ht]
	\scriptsize
	\begin{center}
		\begin{tabular}{c}
			\includegraphics[width=0.9\textwidth]{./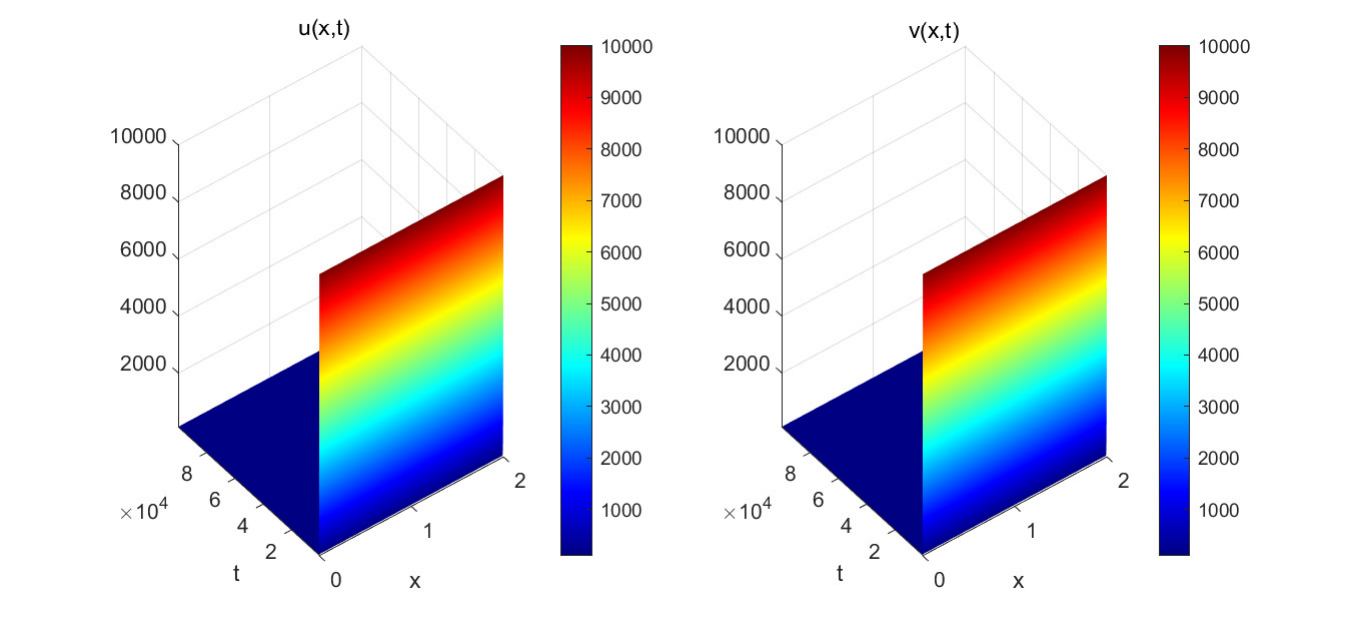}
		\end{tabular}
		\vspace{-0.1cm}
		\caption{\footnotesize Numerical simulation for $u(x,t)$ and $v(x,t)$ of the \eqref{Model 3} in one dimensional case in an interval $[0,2]$, where the initial data $(u_0, v_0)$ are chosen as a small random perturbation of $(10^4, 10^4)$ with $\mu=0.01$ and $\gamma = 1$. }\label{fig_Num_3}
	\end{center}
	\vspace{0.2cm}
\end{figure}

The above results concern the one-dimensional case. Next, we consider the two-dimensional case, which yields the same conclusions as in one dimension. We present these results in Figs.~\ref{fig_Num_4}-\ref{fig_Num_6}. First, for $\mu = 0$ with initial data consisting of a small random perturbation around $(1000, 1000)$, Fig.~\ref{fig_Num_4} shows that both $u(x,t)$ and $v(x,t)$ grow with time, eventually blowing up and becoming unbounded. In contrast, when $\mu > 0$, the solutions remain bounded regardless of initial conditions.
Figs.~\ref{fig_Num_5} and \ref{fig_Num_6} demonstrate this behavior for different initial values. With small perturbations around $(\gamma/\mu, \gamma/\mu)$ (where $\mu = 0.001$ and $\gamma = 1$) for $(u_0(x), v_0(x))$, the solutions exhibit pattern formation during the temporal merging process before eventually stabilizing to $(\gamma/\mu, \gamma/\mu)$, consistent with Theorem . This stabilization occurs even for large initial data $(10^4, 10^4)$, further confirming the theoretical prediction.
Notably, when the initial data equals the steady state value $(\gamma/\mu, \gamma/\mu)$, $u(x,t)$ shows an interesting pattern formation process. As shown in Fig.~\ref{fig_Num_5}, cells are initially uniformly distributed with density $u_0$. After a short time, this uniform steady state begins to break down and form a honeycomb structure. As time progresses, these small aggregations merge to form larger structures. However, this pattern formation does not emerge when the initial values differ from the steady state.
In short, while solutions blow up for $\mu = 0$, they converge to the steady state $(\gamma/\mu, \gamma/\mu)$ when $\mu > 0$, highlighting $\mu$'s critical role in determining long-term behavior in two dimendional case.  

\begin{figure}[!ht]
	\scriptsize
	\begin{center}
		\begin{tabular}{c}
			\includegraphics[width=0.9\textwidth]{./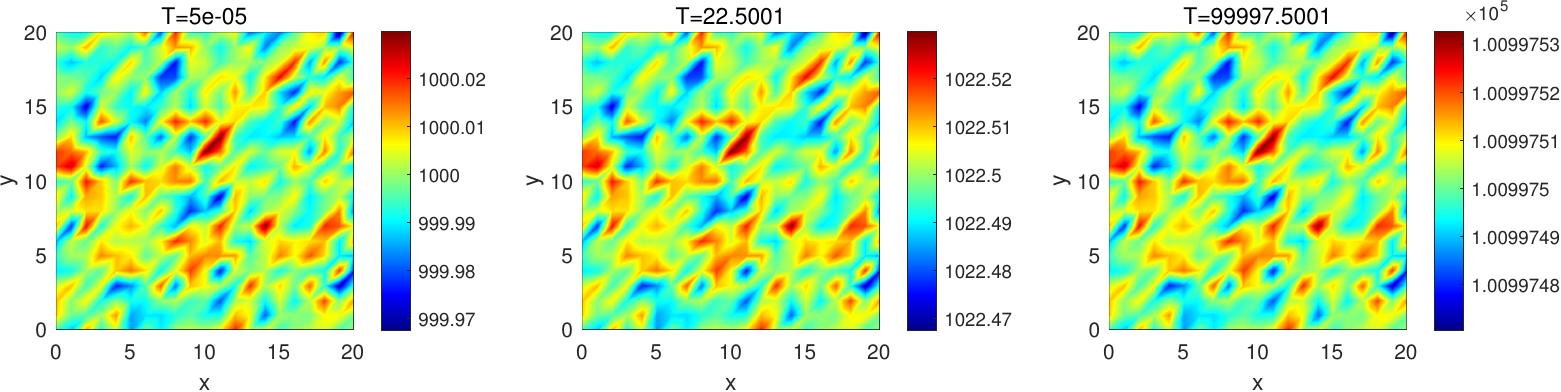}\\
			\includegraphics[width=0.9\textwidth]{./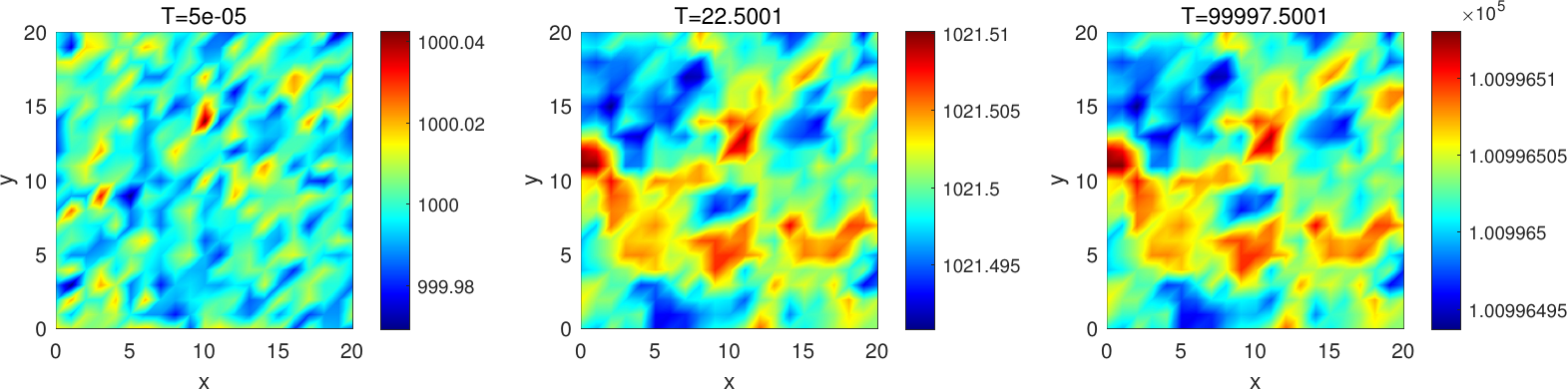}
		\end{tabular}
		\vspace{-0.1cm}
		\caption{\footnotesize  Numerical simulation for $u(x,t)$ and $v(x,t)$ of the \eqref{Model 3} in two dimensional case in an rectangle domain $[0,2]\times [0,2]$, where the initial data $(u_0, v_0)$ are chosen as a small random perturbation of $(1000, 1000)$ with $\mu=0$.}\label{fig_Num_4}
	\end{center}
	\vspace{0.2cm}
\end{figure}

\begin{figure}[!ht]
	\scriptsize
	\begin{center}
		\begin{tabular}{c}
			\includegraphics[width=0.9\textwidth]{./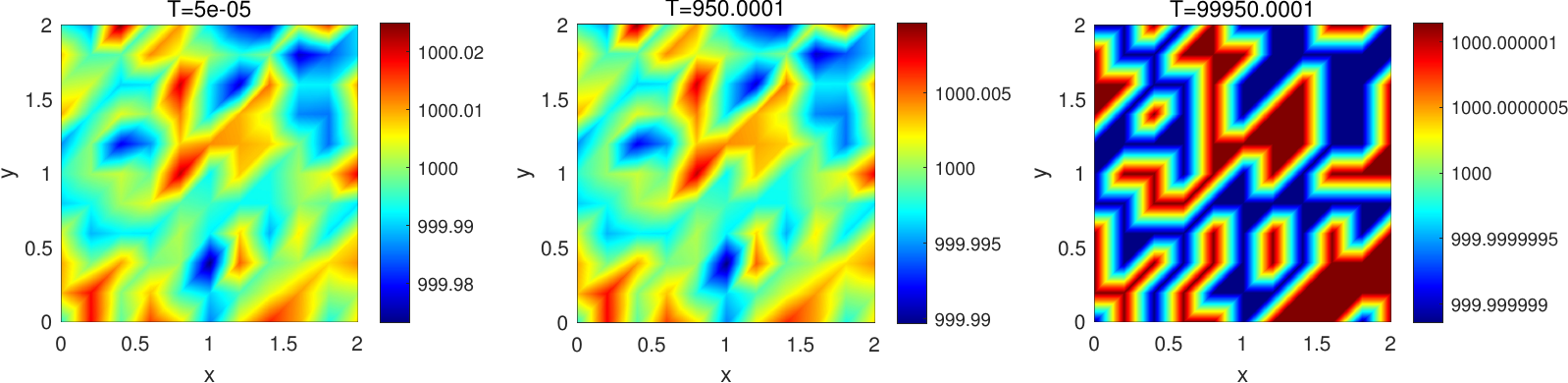}\\
			\includegraphics[width=0.9\textwidth]{./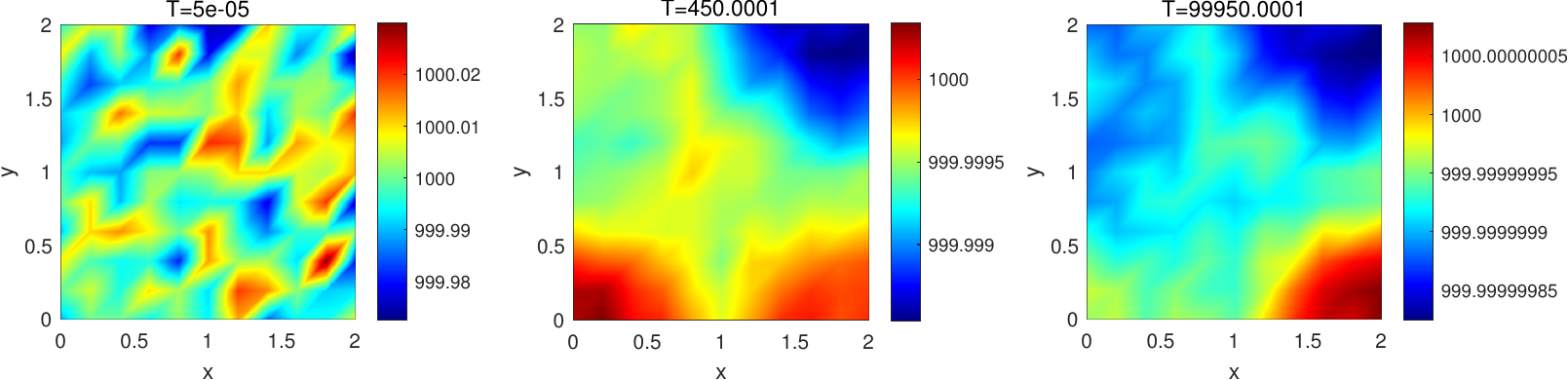}
		\end{tabular}
		\vspace{-0.1cm}
		\caption{\footnotesize  Numerical simulation for $u(x,t)$ and $v(x,t)$ of the \eqref{Model 3} in two dimensional case in an interval $[0,2]\times [0,2]$, where the initial data $(u_0, v_0)$ are chosen as a small random perturbation of $(\gamma/\mu, \gamma/\mu)$ with $\mu=0.001$ and $\gamma = 1$.}\label{fig_Num_5}
	\end{center}
	\vspace{0.2cm}
\end{figure}

\begin{figure}[!ht]
	\scriptsize
	\begin{center}
		\begin{tabular}{c}
			\includegraphics[width=0.9\textwidth]{./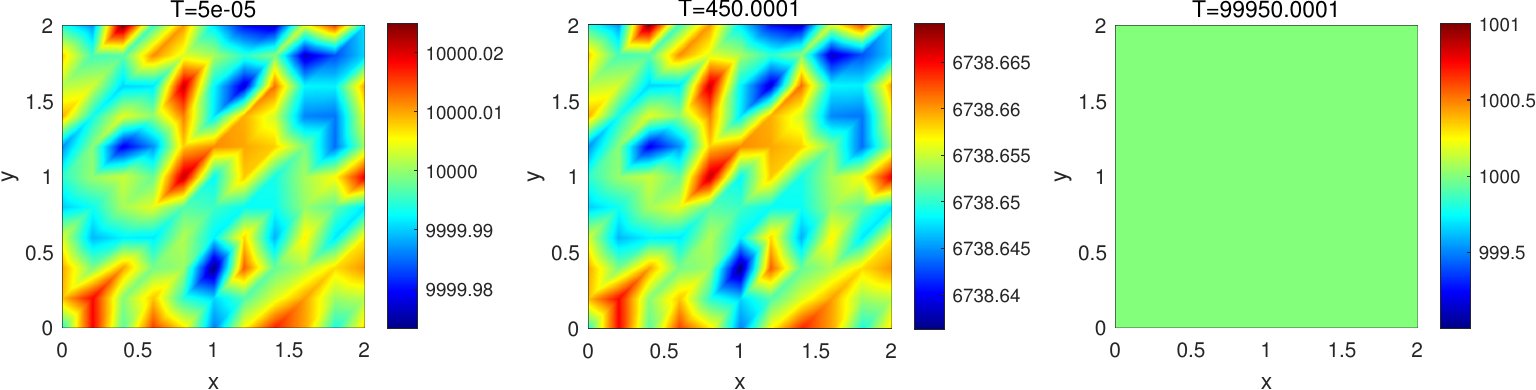}\\
			\includegraphics[width=0.9\textwidth]{./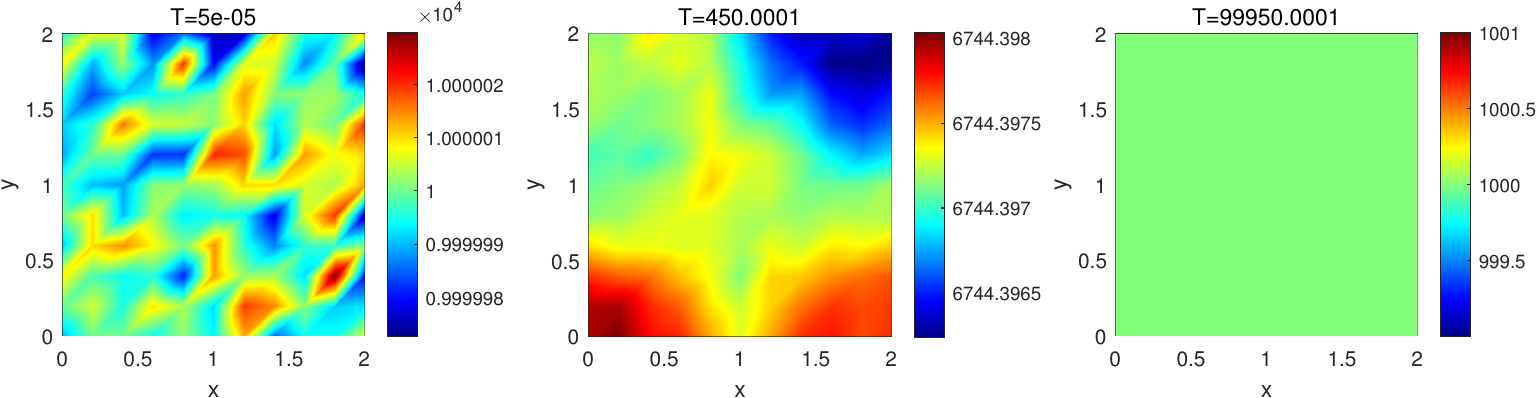}
		\end{tabular}
		\vspace{-0.1cm}
		\caption{\footnotesize  Numerical simulation for $u(x,t)$ and $v(x,t)$ of the \eqref{Model 1} in two dimensional case in an interval $[0,2]\times [0,2]$, where the initial data $(u_0, v_0)$ are chosen as a small random perturbation of $(10^4, 10^4)$ with $\mu=0.001$ and $\gamma = 1$.}\label{fig_Num_6}
	\end{center}
	\vspace{0.2cm}
\end{figure}

The above results concern the two-dimensional case. Next, we consider the three-dimensional case, which yields the same conclusions as in one and two dimensions. We present these results in Figs.~\ref{fig_Num_7}-\ref{fig_Num_9}. First, for $\mu = 0$ with initial data consisting of a small random perturbation around $(1000, 1000)$, Fig.~\ref{fig_Num_7} shows that both the results of $u(x,t)$ and $v(x,t)$ at time $t = 99990.0001$, the value is $10^5$, which is very large. Actually, the solution $u(x,t)$ grows with time, eventually blowing up and becoming unbounded. In contrast, when $\mu > 0$, the solutions remain bounded regardless of initial conditions.
Figs.~\ref{fig_Num_8} and \ref{fig_Num_9} demonstrate this behavior for different initial values. With small perturbations around $(\gamma/\mu, \gamma/\mu)$ (where $\mu = 0.001$ and $\gamma = 1$) for $(u_0(x), v_0(x))$, the solutions exhibit pattern formation during the temporal merging process before eventually stabilizing to $(\gamma/\mu, \gamma/\mu)$, consistent with Theorem . 
Meanwhile, when the initial data equals the steady state value $(\gamma/\mu, \gamma/\mu)$, $u(x,t)$ shows an interesting pattern formation process as two dimensional case, which is shown in Fig.~\ref{fig_Num_8}. When the initial data is chosen as the $(10^4, 10^4)$, the value of $u$ and $v$ are all bounded. But $u$ tends to $\gamma\/mu$, $v$ tends to small pertubation of the initial data.
In short, while solutions blow up for $\mu = 0$, they become bounded when $\mu > 0$, highlighting $\mu$'s critical role in determining long-term behavior. Notabaly, when $f(u) = ru - \mu u$, $D(u,v) =1$, the solution will be blow up in three dimensional case when $\mu >0$. However, in our case,  $f(u) = ru - \mu u$, $D(u,v) =r(v)$, the solution is not blow up, which demonstrates the importance of the density-suppressed term.

\begin{figure}[!ht]
	\scriptsize
	\begin{center}
		\begin{tabular}{c}
			\includegraphics[width=0.9\textwidth]{./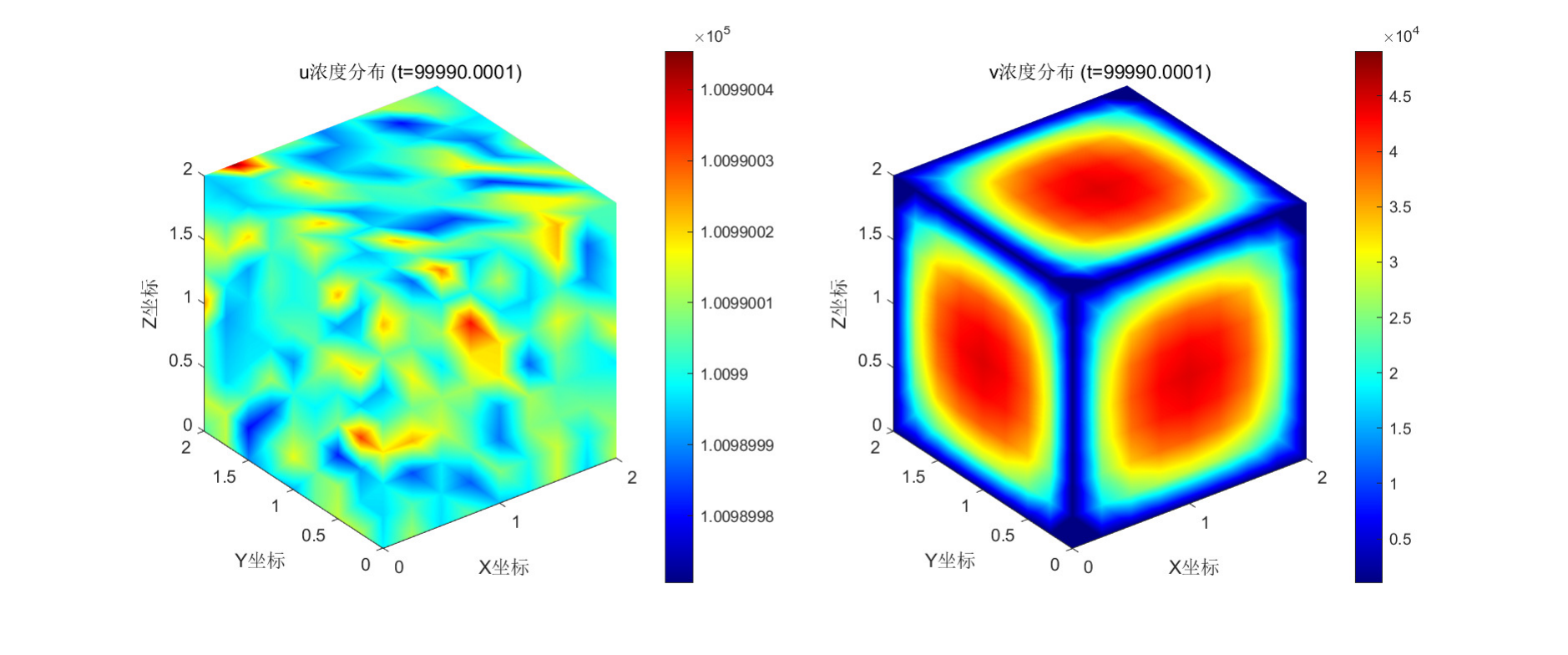}
		\end{tabular}
		\vspace{-0.1cm}
		\caption{\footnotesize Numerical simulation for $u(x,t)$ of the \eqref{Model 3} in three dimensional case in a cube $[0,2]\times [0,2] \times [0,2]$, where the initial data $(u_0, v_0)$ are chosen as a small random perturbation of $(1000, 1000)$ with $\mu=0$. }\label{fig_Num_7}
	\end{center}
	\vspace{0.2cm}
\end{figure}

\begin{figure}[!ht]
	\scriptsize
	\begin{center}
		\begin{tabular}{c}
			\includegraphics[width=0.9\textwidth]{./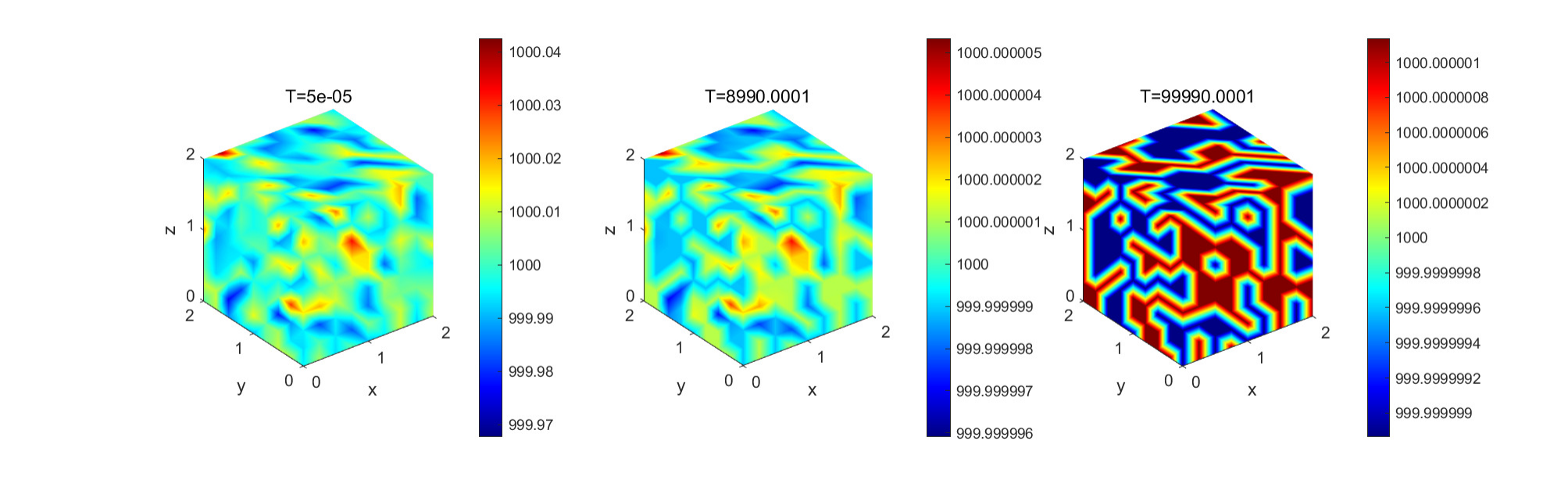}\\
			\includegraphics[width=0.9\textwidth]{./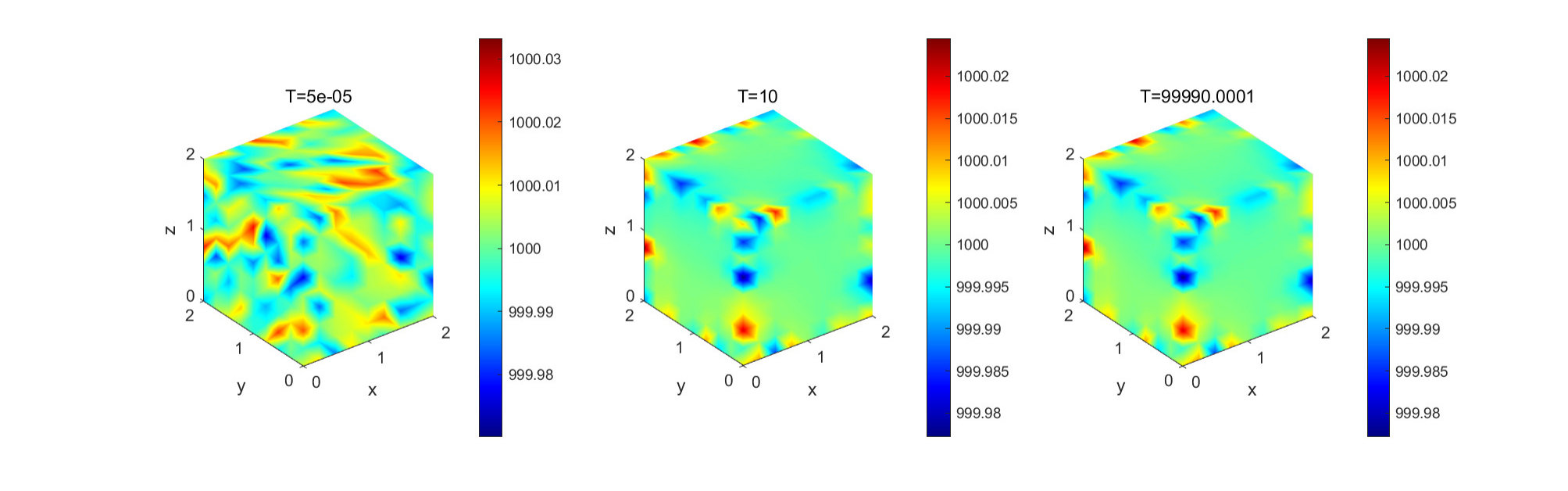}
		\end{tabular}
		\vspace{-0.1cm}
		\caption{\footnotesize Numerical simulation for $u(x,t)$ and $v(x,t)$ of the \eqref{Model 3} in three dimensional case in a cube $[0,2]\times [0,2] \times [0,2]$, where the initial data $(u_0, v_0)$ are chosen as a small random perturbation of $(\gamma/\mu, \gamma/\mu)$ with $\mu=0.001$ and $\gamma  =1$. }\label{fig_Num_8}
	\end{center}
	\vspace{0.2cm}
\end{figure} 

\begin{figure}[!ht]
	\scriptsize
	\begin{center}
		\begin{tabular}{c}
			\includegraphics[width=0.9\textwidth]{./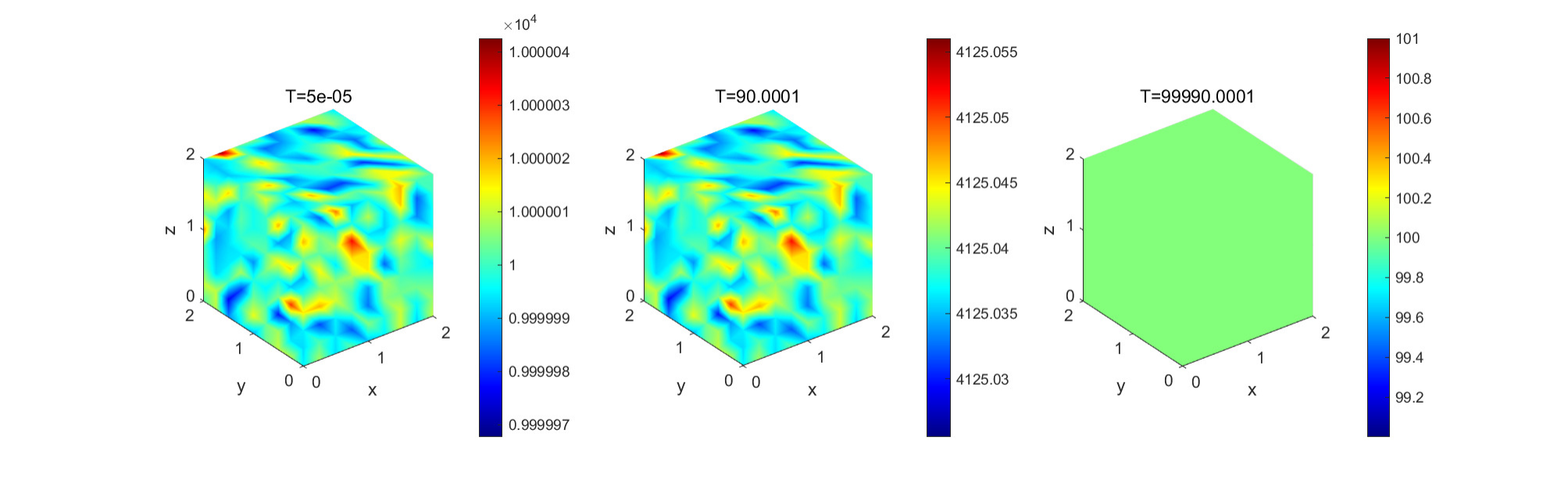}\\
			\includegraphics[width=0.9\textwidth]{./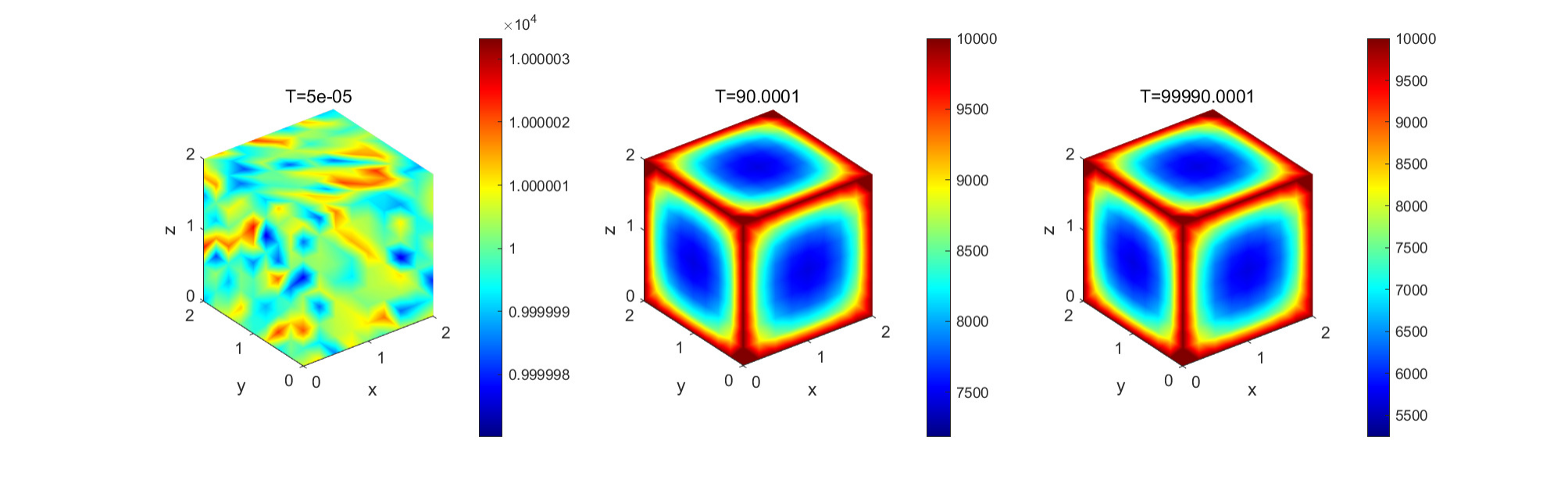}
		\end{tabular}
		\vspace{-0.1cm}
		\caption{\footnotesize Numerical simulation for $u(x,t)$ and $v(x,t)$ of the \eqref{Model 3} in three dimensional case in a cube $[0,2]\times [0,2] \times [0,2]$, where the initial data $(u_0, v_0)$ are chosen as a small random perturbation of $(10^4, 10^4)$ with $\mu=0.01$ and $\gamma  =1$. }\label{fig_Num_9}
	\end{center}
	\vspace{0.2cm}
\end{figure}

In summary, the solution of Model~\ref{Model 3} exhibits blow-up when $\mu = 0$. However, for $\mu > 0$, the solution remains bounded for all initial data. 
Our work focuses on the system with $f(u) = \gamma - \mu u$, but it is worth noting that other cases have been studied in the literature. For instance, when $D(u, v) = 1$ and $\chi(u, v) = \chi > 0$ (where $\chi$ is a constant), the system (1.2) reduces to the minimal Keller-Segel model. If $f(u) = 0$, the solution does not blow up in one-dimensional settings, whereas in higher dimensions (two or more), blow-up behavior whether in finite or infinite time depends on the domain and initial conditions. Another case arises when $f(u) = \mu u(1 - u)$ with a motility function $\gamma(v) \in C^3([0, \infty))$ satisfying $\gamma(v) > 0$, $\gamma'(v) < 0$ for all $v \geq 0$, $\lim_{v \to \infty} \gamma(v) = 0$, and $\lim_{v \to \infty} \frac{\gamma'(v)}{\gamma(v)}$ existing; here, solutions are globally bounded in two-dimensional spaces. 
In the latter case, the inequality
\[
\int_0^{t+\tau} \int_\Omega |\Delta v|^2 \, dx \, dt \leq C
\]
plays a crucial role. However, in our setting with $f(u) = \gamma - \mu u$, this key inequality does not hold, preventing us from deriving boundedness results theoretically. 
Nevertheless, Our numerical simulations reveal a fundamental dichotomy in solution behavior governed by the parameter $\mu$. For $\mu > 0$, we observe globally bounded solutions across all considered dimensions (1D, 2D, and 3D) regardless of initial conditions, suggesting that positive $\mu$ values exert a stabilizing effect on the system dynamics. In stark contrast, the $\mu = 0$ case consistently exhibits finite-time blow-up in all computational scenarios. While these numerical findings strongly indicate the stabilizing role of $\mu > 0$, the theoretical underpinnings remain incomplete due to the breakdown of essential analytical inequalities in our framework. This theoretical gap motivates the development of innovative analytical techniques to overcome current limitations, representing a crucial direction for future research.

\end{document}